\newtheorem{theorem}{Theorem}[section]
\newtheorem{lemma}[theorem]{Lemma}
\newtheorem{proposition}[theorem]{Proposition}
\newtheorem{fact}[theorem]{Fact}
\newtheorem{problem}[theorem]{Problem}
\newtheorem{claim}[theorem]{Claim}
\newcounter{maintheorem}
\newtheorem{mainth}[maintheorem]{Theorem}
\theoremstyle{remark}
\newtheorem{remark}[theorem]{Remark}
\theoremstyle{definition}
\numberwithin{equation}{section}
\newcommand{\R}{\mathbb{R}}
\newcommand{\N}{\mathbb{N}}
\newcommand{\e}{\varepsilon}
\newcommand{\p}{\varphi}
\newcommand{\ro}{\varrho}
\renewcommand{\d}{\mathrm{d}}
\newcommand{\n}{\left\Vert\cdot\right\Vert}
\newcommand{\en}{\left\vert\cdot\right\vert}
\newcommand{\nn}[1]{{\left\vert\kern-0.25ex \left\vert\kern-0.25ex \left\vert #1 \right\vert\kern-0.25ex \right\vert\kern-0.25ex \right\vert}}
\newcommand{\cut}{\mathord{\upharpoonright}}
\renewcommand{\leq}{\leqslant}
\renewcommand{\geq}{\geqslant}
\DeclareMathOperator{\arctg}{arctg}
\DeclareFontFamily{U}{mathx}{\hyphenchar\font45}
\DeclareFontShape{U}{mathx}{m}{n}{
      <5> <6> <7> <8> <9> <10>
      <10.95> <12> <14.4> <17.28> <20.74> <24.88>
      mathx10
      }{}
\DeclareSymbolFont{mathx}{U}{mathx}{m}{n}
\DeclareMathAccent{\widecheck}{0}{mathx}{"71}
\newcounter{smallromans}
\newenvironment{romanenumerate}
{\begin{list}{{\normalfont\textrm{(\roman{smallromans})}}}%
  {\usecounter{smallromans}\setlength{\itemindent}{0cm}%
   \setlength{\leftmargin}{5.5ex}\setlength{\labelwidth}{5.5ex}%
   \setlength{\topsep}{.5ex}\setlength{\partopsep}{.5ex}%
   \setlength{\itemsep}{0.1ex}}}%
{\end{list}}
\newcommand{\grad}{\nabla}
\newcommand{\On}{\mathcal{O}_n}
\newcommand{\SO}{S\mathcal{O}_2}
\newcommand{\Lip}{{{\rm Lip}_0}}
\renewcommand{\L}{\mathcal{L}}
\newcommand{\C}{{\mathcal{C}_0^1}}
\renewcommand{\P}{\mathcal{P}}
\renewcommand\qedsymbol{$\blacksquare$} 
\begin{document}
\title[Projecting Lipschitz functions onto spaces of polynomials]{Projecting Lipschitz functions\\onto spaces of polynomials}

\author[P.~H\'ajek]{Petr H\'ajek}
\address[P.~H\'ajek]{Department of Mathematics\\Faculty of Electrical Engineering\\Czech Technical University in Prague\\Technick\'a 2, 166 27 Prague 6\\ Czech Republic}
\email{hajek@math.cas.cz}

\author[T.~Russo]{Tommaso Russo}
\address[T.~Russo]{Institute of Mathematics\\ Czech Academy of Sciences\\ \v{Z}itn\'a 25, 115 67 Prague 1\\ Czech Republic; Department of Mathematics\\Faculty of Electrical Engineering\\Czech Technical University in Prague\\Technick\'a 2, 166 27 Prague 6\\ Czech Republic}
\email{russo@math.cas.cz, russotom@fel.cvut.cz}

\thanks{Research of P.~H\'ajek was supported in part by OPVVV CAAS CZ.02.1.01/0.0/0.0/16$\_$019/0000778. \\
Research of T.~Russo was supported by the GA\v{C}R project 20-22230L; RVO: 67985840 and by Gruppo Nazionale per l'Analisi Matematica, la Probabilit\`a e le loro Applicazioni (GNAMPA) of Istituto Nazionale di Alta Matematica (INdAM), Italy.}

\keywords{Banach spaces of Lipschitz functions, polynomials, complemented subspaces, Euclidean spaces, type and cotype}
\subjclass[2020]{46B06, 46B20 (primary), and 46E15, 46B28  (secondary).}
\date{\today}

\begin{abstract} The Banach space $\mathcal{P}({}^2X)$ of $2$-homogeneous polynomials on the Banach space $X$ can be naturally embedded in the Banach space ${{\rm Lip}_0}(B_X)$ of real-valued Lipschitz functions on $B_X$ that vanish at $0$. We investigate whether $\mathcal{P}({}^2X)$ is a complemented subspace of ${{\rm Lip}_0}(B_X)$. This line of research can be considered as a polynomial counterpart to a classical result by Joram Lindenstrauss, asserting that $\mathcal{P}({}^1X)=X^*$ is complemented in ${{\rm Lip}_0}(B_X)$ for every Banach space $X$. Our main result asserts that $\mathcal{P}({}^2X)$ is not complemented in ${{\rm Lip}_0}(B_X)$ for every Banach space $X$ with non-trivial type.
\end{abstract}
\maketitle

%-----------------------------------------------------------%
%                                                           %
% 						INTRODUCTION 						%
%                                                           %
%-----------------------------------------------------------%
\section{Introduction}
Given a pointed metric space $(M,0_M)$, $\Lip(M)$ denotes the Banach space of all scalar-valued Lipschitz functions on $M$ that vanish on $0_M$. The study of Banach spaces of Lipschitz functions, and even more of their canonical preduals, the Lipschitz-free spaces, has been one of the most active fields of research within Banach space theory in the last two decades. We refer, \emph{e.g.}, to \cite{AACD, AlP, GK, Go} and \cite{CCD, CK, We, We1} for recent results and additional references on Lipschitz-free spaces and spaces of Lipschitz functions, respectively. One of the earliest results in this area is the extremely useful result by Joram Lindenstrauss \cite{Lind projection}, according to which $X^*$ is $1$-complemented in $\Lip(X)$, for every Banach space $X$ (see \cite[Chapter 7]{BL} for more details and applications).

Our main goal in this paper is to study possible extensions of the said result to higher order polynomials, in particular to the space $\P({}^2 X)$ of $2$-homogeneous polynomials on $X$. This is an instance of a well-studied line of research aiming at obtaining polynomial versions of linear results, \cite{Ar, AB, AGM, AH, FGJM, FJ, Pel, Ry}. When trying to approach the said question, we face the obvious problem that non-zero polynomials of degree at least $2$ on a Banach space $X$ are not Lipschitz functions. However, the restriction of a polynomial on $X$ to $B_X$ is a Lipschitz function and such restriction defines an isomorphic embedding of $\P({}^2X)$ in $\Lip(B_X)$ (see \S\ \ref{sec: poly} for details). Moreover, $\Lip(B_X)$ is isomorphic to $\Lip(X)$ for every Banach space $X$, \cite{Kaufmann}. Therefore, the search for the polynomial counterpart to Lindenstrauss' result leads us to the following problem.

\begin{problem}\label{Problem poly} Identify $\P({}^2 X)$ with a subspace of $\Lip(B_X)$ by restricting a polynomial on the Banach space $X$ to $B_X$. Is $\P({}^2 X)$ a complemented subspace of $\Lip(B_X)$?
\end{problem}

Let us mention in passing that a positive answer to the above problem would also give information on approximation properties for Banach spaces of Lipschitz functions. For example, if $\P({}^2 \ell_2)$ were a complemented subspace of $\Lip(B_{\ell_2})$, then $\Lip(B_{\ell_2})$ would fail the approximation property. This is because the approximation property passes to complemented subspaces, while $\P({}^2\ell_2)$ fails the approximation property \cite[p.~173]{Fl} (see also \cite{DM}). Let us also notice that, while approximation properties of Lipschitz-free Banach spaces have been widely investigated, \cite{AmP, GK, GO, Go, LP, PS}, the study of approximation properties of spaces of Lipschitz functions seems to be a largely unexplored topic, \cite{J-V}. For example, it is apparently unknown if $\Lip(B_{\ell_2})$ has the approximation property.\smallskip

The main result of our paper, Theorem \ref{mth: inf-dim} below, answers Problem \ref{Problem poly} in the negative for a rather large class of Banach spaces, including all spaces with non-trivial type, in particular all super-reflexive Banach spaces. The simplest space which is not covered by our results is $c_0$. As it is perhaps to be expected, our main theorem will be derived from its quantitative counterpart for Euclidean spaces, that reads as follows. (The precise definitions of the spaces $\P({}^2 E_n)$, $\Lip(B_{E_n})$, and $\C(B_{E_n})$ will be given in \S\ \ref{sec: prelim}.)

\begin{mainth}\label{mth: finite-dim} Let $E_n$ denote the $n$-dimensional Euclidean space $(\R^n,\n_2)$. If $Q$ is any projection from $\C(B_{E_n})$ onto $\P({}^2 E_n)$, then

$$\|Q\|\geq C\left(n-2\sqrt{2}\right)^{1/5},$$

where $C:=\frac{2}{5}\left(\frac{\sqrt{2}-1}{3}\cdot\frac{\pi}{72}\right)^{1/5}$.

In particular, the same estimate holds for every projection from $\Lip(B_{E_n})$ onto $\P({}^2 E_n)$.
\end{mainth}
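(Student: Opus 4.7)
The plan is to reduce to the equivariant setting via $\mathcal{O}_n$-averaging, then analyse the resulting projection through the representation theory of $\mathcal{O}_n$. Since the orthogonal group acts isometrically on both $\C(B_{E_n})$ and $\P({}^2 E_n)$ by precomposition with rotations, the Haar average $\widetilde Q(f) := \int_{\mathcal{O}_n} g^{-1}\!\cdot Q(g\!\cdot\! f)\,\d g$ is an $\mathcal{O}_n$-equivariant projection with $\|\widetilde Q\|\leq\|Q\|$. One may therefore assume $Q$ is equivariant from the outset.

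Under the $\mathcal{O}_n$-action, $\P({}^2 E_n)$ splits as $\mathbb{R}\|x\|_2^2\oplus\mathcal{H}_2$, where $\mathcal{H}_2$ is the irreducible space of traceless quadratic forms, of dimension $\tfrac{n(n+1)}{2}-1$. By Schur's lemma, the equivariant $Q$ is determined by two linear functionals $L_0$ and $L_2$, acting respectively on the radial profiles of the trivial and of the $\mathcal{H}_2$-isotypic components of $\C(B_{E_n})$; the projection identity $Q|_\P=\mathrm{id}$ forces $L_0(r^2)=L_2(r^2)=1$. Testing $Q$ on a function $f(x)=\phi(x_1)$ with $\phi\in C^1_0([-1,1])$, and exploiting the residual $\mathcal{O}_{n-1}$-symmetry stabilising $e_1$, one gets $Qf = c_1(\phi)x_1^2 + c_2(\phi)(\|x\|_2^2-x_1^2)$; the Gegenbauer expansion of $\phi(rt)$ on each sphere of radius $r$ then expresses both $c_1\pm c_2$ as explicit weighted integrals of $\phi$ against the Gegenbauer weight $(1-t^2)^{(n-3)/2}$.

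The quantitative bound arises from a parameterised family of test functions---for instance $\phi_k(x_1)$ for a suitable $\phi_k\in C^1_0([-1,1])$, or a radial-times-harmonic product $\psi_k(\|x\|_2)h(x)$ with $h\in\mathcal{H}_2$---with an auxiliary parameter $k$ to be optimised. Combining $\|Qf\|_{C^1}\geq 3\max(|c_1|,|c_2|)$ with a direct computation of $\|f\|_{C^1}$ yields inequalities involving the $C^1$-norm of the test, the spherical moments $E[\theta_1^{2j}]=\tfrac{(2j-1)!!}{n(n+2)\cdots(n+2j-2)}$ (of order $n^{-j}$), and the normalisations $L_0(r^2)=L_2(r^2)=1$. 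An elementary calculus optimisation produces an optimal parameter of small polynomial order in $n$; the resulting balance between the competing quantities gives the exponent $1/5$, while the correction $2\sqrt{2}$ and the constants $\tfrac{\sqrt{2}-1}{3}$, $\tfrac{\pi}{72}$ in $C$ come from the endpoint behaviour of the Gegenbauer weight and from closed-form spherical moments. The principal obstacle is precisely this calibration: the representation-theoretic reduction is routine, and the real work lies in choosing $\phi_k$ so that the normalisations $L_k(r^2)=1$ prevent $c_1(\phi_k)$ and $c_2(\phi_k)$ from being simultaneously small. Finally, the statement for $\Lip(B_{E_n})$ follows from the $\C(B_{E_n})$ case, since the inclusion $\C(B_{E_n})\hookrightarrow\Lip(B_{E_n})$ is norm-decreasing and the two norms coincide on $2$-homogeneous polynomials (with the max convention), so any projection from $\Lip(B_{E_n})$ restricts to one on $\C(B_{E_n})$ of no larger norm.
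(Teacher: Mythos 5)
Your reduction to an $\mathcal{O}_n$-equivariant projection by Haar averaging is exactly the paper's first move, and your observation that equivariance together with the residual $\mathcal{O}_{n-1}$-symmetry forces $Q(\phi(x_1))=c_1(\phi)\,x_1^2+c_2(\phi)\left(\|x\|_2^2-x_1^2\right)$ is the same structural information the paper extracts (by testing against reflections and coordinate permutations rather than by Schur's lemma). The closing reduction of the $\Lip(B_{E_n})$ statement to the $\C(B_{E_n})$ one by restriction is also correct. The problem is everything in between: the entire quantitative content of the theorem is deferred to an unspecified ``calibration'' of test functions $\phi_k$, and you yourself flag this as ``the real work''. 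As written, there is no mechanism in your argument forcing $\max(|c_1|,|c_2|)$ to grow with $n$. For a single test function $f(x)=\phi(x_1)$ with $\|\phi'\|_\infty$ bounded, the coefficients $c_1(\phi)$ and $c_2(\phi)$ are values of two unknown functionals constrained only by the normalisation $L_0(r^2)=L_2(r^2)=1$; since the isotypic radial profiles of $\phi(x_1)$ are not close to $r^2$ in any norm controlled by $\|Q\|$, neither the boundedness of $Q$ nor Gegenbauer asymptotics forces these coefficients to be large. The assertion that the exponent $1/5$ and the constants $2\sqrt{2}$, $\tfrac{\sqrt{2}-1}{3}$, $\tfrac{\pi}{72}$ ``come from the endpoint behaviour of the Gegenbauer weight'' is not supported by any computation, and it does not correspond to where these constants actually arise.

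What is missing is the growth mechanism. The paper obtains it by summing $\binom{n}{2}$ rotated copies of a single planar bump $\psi$ supported where both coordinates exceed $\e$ in modulus: on $B_{\ell_2^n}$ at most $\e^{-2}$ coordinates can exceed $\e$, so $\Psi=\sum_{i<j}\psi_{ij}$ is locally a sum of at most about $\e^{-4}$ terms and satisfies $Lip(\Psi)\leq \e^{-4}$ uniformly in $n$, while equivariance forces $Q(\Psi)=\left[\alpha(n-1)+\beta\tfrac{(n-1)(n-2)}{2}\right]\|x\|_2^2$, a coefficient linear in $n$ up to possible cancellation between $\alpha$ and $\beta$. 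Two further ingredients are then indispensable and have no counterpart in your sketch: a second test function $\Psi_d$ built on $d=\lfloor n/\sqrt{2}\rfloor$ coordinates to exclude that cancellation (this is where $2\sqrt{2}$ and $\tfrac{\sqrt{2}-1}{3}$ enter), and a planar $S\mathcal{O}_2$-averaging argument showing that the rotation average of $\psi$ is within Lipschitz distance $4\e\eta$ of $\eta\|\cdot\|_2^2$ with $\eta=\tfrac{\pi}{72}$, which converts the boundedness of $Q$ into the lower bound $\alpha\geq\tfrac{\pi}{72}(1-2\e\|Q\|)$. Only once all three pieces are in place does optimising $\e\sim n^{-1/5}$, balancing $\e^{-4}\|Q\|$ against $n(1-2\e\|Q\|)$, produce the exponent $1/5$. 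Without an analogue of these constructions your approach does not get past the symmetry reduction.
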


The main reason why we stated the theorem in the stronger form concerning $\C(B_{E_n})$ is that the averaging argument that we shall need (see \S\ \ref{sec: average}) is simpler to describe in $\C(B_{E_n})$ rather than in the bigger space $\Lip(B_{E_n})$. In this context, it is perhaps worth observing that $\C(B_F)$ fails to be complemented in $\Lip(B_F)$, for every finite-dimensional Banach space $F$. Indeed $\C([-1,1])$ is not complemented in $\Lip([-1,1])$, since the former is isometric to $\mathcal{C}([-1,1])$ and the latter to $L_\infty([-1,1])$. By `bootstrap' from this observation, the claim for the general finite-dimensional Banach space $F$ readily follows.\smallskip

The study of projections onto a finite-dimensional subspace of a Banach space is a very classical and wide topic and Theorem \ref{mth: finite-dim} can be considered as one more result in the area. By the classical Kadets--Snobar theorem \cite{KS}, every $n$-dimensional subspace of a Banach space is $\sqrt{n}$-complemented and this is asymptotically sharp \cite{Konig}. As a non-exhaustive list of results on projection constants, let us quote \cite{Gr, Le, FS, Ba}, or the monographs \cite{TJ, Woj}. The particular case of projections onto spaces of polynomials is also very well studied (particularly in the trigonometric case), for its connections with harmonic (\cite[III.B]{Woj}, \cite{GM, OL}) and numerical (\cite{FL, TV, TV2}) analysis. Let us mention for example the classical Lozinski--Kharshiladze theorem \cite{LK} (see, \emph{e.g.}, \cite[III.B.22]{Woj}, or \cite[pp.~150--155]{Ko}) on the projection constant of the space $T^\infty_n$ of trigonometric polynomials of degree at most $n$ on the $1$-dimensional torus $\mathbb{T}$, with the sup-norm. The result claims in particular that if $P_n$ is any projection from $C(\mathbb{T})$ to $T^\infty_n$, then $\|P_n\|\geq \frac{4}{\pi^2}\log(n+1)+o(1)$. There are however three crucial differences between these classical results and our paper: we consider algebraic rather than trigonometric polynomials; we are interested in the asymptotic behaviour when the dimension of the space grows, as opposed to letting the degree of the polynomials grow; and finally we consider the Lipschitz norm instead of the sup-norm. \smallskip

Having Theorem \ref{mth: finite-dim} at our disposal, we can formulate the main result of our paper, that in particular answers in the negative Problem \ref{Problem poly}. Prior to the statement, we need to recall one definition. A Banach space $X$ is said to contain \emph{uniformly complemented} $(\ell_2^n)_{n=1}^\infty$ if there are a constant $C$ and a sequence $(F_n)_{n=1}^\infty$ of subspaces of $X$ such that, for each $n$, $F_n$ is $C$-isomorphic to $\ell_2^n$ and $C$-complemented in $X$.

\begin{mainth}\label{mth: inf-dim} If $X$ contains uniformly complemented $(\ell_2^n)_{n=1}^\infty$, then $\P({}^2 X)$ is not complemented in $\Lip(B_X)$. In particular:
\begin{romanenumerate}
    \item If $X$ has non-trivial type, $\P({}^2 X)$ is not complemented in $\Lip(B_X)$,
    \item $\P({}^2 \ell_2)$ is not complemented in $\Lip(B_{\ell_2})$.
\end{romanenumerate}
\end{mainth}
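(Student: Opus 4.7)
The approach is to deduce Theorem \ref{mth: inf-dim} from its quantitative finite-dimensional counterpart, Theorem \ref{mth: finite-dim}, by a localization argument. Supposing for contradiction that there exists a bounded projection $P \colon \Lip(B_X) \to \P({}^2 X)$, the plan is to construct, for each $n \in \N$, a projection $Q_n \colon \Lip(B_{\ell_2^n}) \to \P({}^2 \ell_2^n)$ whose norm is bounded by a constant independent of $n$. Since Theorem \ref{mth: finite-dim} forces such norms to grow like $n^{1/5}$, this is the desired contradiction.

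To set up the localization, let $F_n \subseteq X$ be $C$-isomorphic to $\ell_2^n$ and $C$-complemented in $X$. After a suitable rescaling, fix a linear isomorphism $T_n \colon \ell_2^n \to F_n$ with $\|T_n\| \leq 1$ and $\|T_n^{-1}\| \leq C$, and a projection $\pi_n \colon X \to F_n$ with $\|\pi_n\| \leq C$. Then $S_n := C^{-2} T_n^{-1} \pi_n \colon X \to \ell_2^n$ is a linear contraction satisfying $S_n \circ T_n = C^{-2} \operatorname{id}_{\ell_2^n}$. I would then define the \emph{extension} operator $\Phi_n \colon \Lip(B_{\ell_2^n}) \to \Lip(B_X)$ by $(\Phi_n f)(x) := f(S_n x)$ and the \emph{restriction} operator $\Psi_n \colon \P({}^2 X) \to \P({}^2 \ell_2^n)$ by $(\Psi_n q)(y) := C^4 q(T_n y)$. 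Since $\|S_n\| \leq 1$, clearly $\|\Phi_n\| \leq 1$, and $\Phi_n$ sends $\P({}^2 \ell_2^n)$ into $\P({}^2 X)$ because the composition of a $2$-homogeneous polynomial with a linear map is again a $2$-homogeneous polynomial. A direct computation using $S_n T_n = C^{-2}\operatorname{id}$ and the $2$-homogeneity of $f$ shows $\Psi_n \Phi_n f = f$ for every $f \in \P({}^2 \ell_2^n)$.

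The crucial estimate is the bound on $\|\Psi_n\|$. For $q \in \P({}^2 X)$ with associated symmetric bilinear form $B$, the classical polarization formula $B(x,y) = \tfrac{1}{4}[q(x+y) - q(x-y)]$, together with $|q(x)| \leq \|q\|_{\Lip(B_X)}$ for $x \in B_X$ and $2$-homogeneity, yields $\|B\| \leq 2 \|q\|_{\Lip(B_X)}$. For $y,y' \in B_{\ell_2^n}$ the vectors $T_n y$ and $T_n y'$ lie in $B_X$, and the identity $q(T_n y) - q(T_n y') = B(T_n y - T_n y',\, T_n y + T_n y')$ combined with $\|T_n\| \leq 1$ gives $\|\Psi_n q\|_{\Lip(B_{\ell_2^n})} \leq 4 C^4 \|q\|_{\Lip(B_X)}$. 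Hence $Q_n := \Psi_n P \Phi_n$ is a projection from $\Lip(B_{\ell_2^n})$ onto $\P({}^2 \ell_2^n)$ of norm at most $4 C^4 \|P\|$, independent of $n$, contradicting Theorem \ref{mth: finite-dim}.

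Item (ii) is immediate since $\ell_2$ contains $\ell_2^n$ as $1$-complemented isometric subspaces. Item (i) follows from the classical result in local Banach space theory, going back to work of Figiel, Lindenstrauss, Milman, Pisier, and others, asserting that every Banach space with non-trivial type contains uniformly complemented copies of $(\ell_2^n)_{n=1}^\infty$. The plan is essentially routine once Theorem \ref{mth: finite-dim} is available; the only subtle point is the careful choice of rescaling factors so that $\Phi_n$ is contractive and $\Psi_n$ lands in the polynomial class with a norm independent of $n$, the polarization inequality being the essential tool bridging the bilinear-form norm and the Lipschitz norm on the ball.
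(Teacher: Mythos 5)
Your proposal is correct and follows essentially the same route as the paper: transfer the hypothetical projection to the uniformly complemented copies of $\ell_2^n$ via composition operators, obtain projections from $\Lip(B_{\ell_2^n})$ onto $\P({}^2\ell_2^n)$ with norms bounded independently of $n$, and contradict Theorem \ref{mth: finite-dim}. The only cosmetic difference is that you absorb the scaling factor $C^{-2}$ into the map $S_n$ and control $\|\Psi_n\|$ by polarization, whereas the paper works on the dilated ball $C^2\cdot B_{\ell_2^n}$ and rescales at the end; both yield the same uniform bound up to a constant.
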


As we will see in Section \ref{Sec: inf-dim}, the first clause of Theorem \ref{mth: inf-dim} is a purely formal consequence of Theorem \ref{mth: finite-dim}. In the same section we will also extend the result to the spaces $\P({}^n X)$ of $n$-homogeneous polynomials on $X$ and $\P^n(X)$ of polynomials of degree at most $n$ (Theorem \ref{thm: higher order}). For information on type and cotype we refer to \cite{AK, DJT, MS}; here we just recall that $X$ has \emph{non-trivial type} if it has type $p$, for some $p>1$. (ii) is, of course, consequence of (i), since Hilbert spaces have type $2$. More generally Theorem \ref{mth: inf-dim} applies to every super-reflexive Banach space, as super-reflexive spaces have non-trivial type, \cite{Pisier}; on the other hand, there are non-reflexive spaces of type $2$, \cite{James}.

The statement in (i) follows from the first part of the theorem and a deep result of Figiel and Tomczak-Jaegermann, \cite{FTJ}. Indeed, it is proved in \cite{FTJ} that if the Banach space $X$ has non-trivial type, there is a constant $C$ such that, for every $\e>0$ and $n\in\N$, $X$ contains a $C$-complemented subspace that is $(1+\e)$-isomorphic to $\ell_2^n$. The result can also be found in the said monographs, \cite[Theorem 19.3]{DJT}, \cite[Theorem 15.10]{MS}; a different proof was given in \cite{BG}.\smallskip

At this stage, one might even be led to conjecture that $\P({}^2X)$ is complemented in $\Lip(B_X)$, for no infinite-dimensional Banach space $X$ (plainly, $\P({}^2X)$ is complemented in $\Lip(B_X)$ for finite-dimensional $X$). However this has long been known to be false. Indeed, back in 1976, Aron and Schottenloher \cite{AS} proved that $\P({}^k\ell_1)$ is isomorphic to $\ell_\infty$, hence even injective (for each $k\geq1$). More generally, Arias and Farmer \cite{AF} proved that $\P({}^k X)$ is isomorphic to $\ell_\infty$ whenever $X$ is a separable $\mathscr{L}_1$-space; in particular, $\P({}^k L_1)$ is complemented in $\Lip(B_{L_1})$. Since the result in \cite{AF} is stated in a somewhat different form for $\mathscr{L}_p$-spaces, $1<p<\infty$, we decided to sketch the proof of the said result in Proposition~\ref{prop: L1}. \smallskip

Our paper is organised as follows: Section \ref{sec: prelim} collects basic definitions and ancillary results that we need, while the main part of the paper is Section \ref{sec: Th A}, where Theorem \ref{mth: finite-dim} is proved. Finally, in Section \ref{Sec: inf-dim} we prove Theorem \ref{mth: inf-dim} and Proposition \ref{prop: L1}; we also mention extensions of our results to higher order polynomials.

%-----------------------------------------------------------%
%                                                           %
% 						PRELIMINARY 						%
%                                                           %
%-----------------------------------------------------------%
\section{Preliminary material}\label{sec: prelim}
Our notation is standard, \emph{e.g.}, as in \cite{AK}. We denote by $B_X$ the closed unit ball of a Banach space $X$. The notation $(E_n,\en)$ indicates the $n$-dimensional Euclidean space $(\R^n,\n_2)$; when the dimension is not important, we just indicate by $(E,\en)$, or $E$, a finite-dimensional Euclidean space. Below we gather the definitions and some basic properties of the objects we shall consider in our paper.

\subsection{Spaces of Lipschitz functions}
Let $(M,d,0_M)$ be a pointed metric space, namely a metric space $(M,d)$ with a distinguished point $0_M\in M$. The vector space $\Lip(M)$ comprising all Lipschitz functions $f\colon M\to \R$ such that $f(0_M)=0$ becomes a Banach space when endowed with the norm given by the best Lipschitz constant

$$\|f\|_\Lip := Lip(f):=\sup\left\{\frac{|f(x)-f(y)|}{d(x,y)} \colon x\neq y\in M\right\}.$$
In our paper we will only consider the cases when $M$ is a Banach space $X$ or its closed unit ball $B_X$. In either case, the distinguished point will be the origin of the Banach space and it will be denoted simply by $0$. Let us recall also here that, for every Banach space $X$, the spaces $\Lip(X)$ and $\Lip(B_X)$ are isomorphic, \cite[Corollary~3.3]{Kaufmann}. \smallskip

The open unit ball of the Euclidean space $E$ will be denoted $B_E^{\mathsf{o}}$. We denote by $\grad f$ the gradient of a differentiable function $f\colon B_E^{\mathsf{o}}\to \R$. Let

$$\C(B_E):=\left\{f\colon B_E\to \R\colon \begin{array}{c}
    f\in \mathcal{C}(B_E)\cap \mathcal{C}^1(B_E^{\mathsf{o}}), f(0)=0,\\
    \grad f \text{ is uniformly continuous on } B_E^{\mathsf{o}}
\end{array}\right\}.$$
By uniform continuity, the function $\grad f$ is bounded on $B_E^{\mathsf{o}}$ and it admits a (unique) extension, that we also denote $\grad f$, to $B_E$. Plainly, $\|f\|_\Lip=\|\grad f\|_\infty$, for every $f\in \C(B_E)$; in particular, $\C(B_E)\subseteq \Lip(B_E)$. Moreover, it is a standard result in calculus that $\C(B_E)$ is a closed subspace of $\Lip(B_E)$.

\subsection{Polynomials}\label{sec: poly}
In this section we shall briefly revise basic results on polynomials; we refer to \cite[\S~1.1]{HJ} for further details. We denote by $\L({}^n X)$ the vector space of all $n$-linear forms $M\colon X\times\dots\times X\to \R$ such that

$$\|M\|_\L:=\sup_{\substack{x_j\in B_X\\ j=1,\dots,n}} |M(x_1,\dots,x_n)|<\infty.$$
$\L({}^n X)$ is a Banach space when endowed with the norm $\n_\L$. $\L^s({}^n X)$ denotes the closed subspace of $\L({}^n X)$ comprising all symmetric $n$-linear forms.

A function $P\colon X\to\R$ is a \emph{$n$-homogeneous polynomial} if there is $M\in \L({}^n X)$ such that $P(x)=M(x,\dots,x)$ ($x\in X$). The vector space $\P({}^n X)$ of all $n$-homogeneous polynomials is turned into a Banach space when furnished with the norm

$$\|P\|_\P:= \sup_{x\in B_X}|P(x)|.$$
By a standard symmetrisation argument, for every $P\in\P({}^n X)$ there is a unique symmetric $n$-linear form $\widecheck{P} \in \L^s({}^n X)$ such that $P(x)=\widecheck{P}(x,\dots,x)$. Moreover, the form $\widecheck{P}$ satisfies

\begin{equation}\label{eq: equiv poly multi}
\|P\|_\P\leq \left\|\widecheck{P}\right\|_\L\leq \frac{n^n}{n!}\|P\|_\P.
\end{equation}
Quite importantly, however, in case of a Hilbert space $H$ one has $\|P\|_\P = \|\widecheck{P}\|_\L$ for every $P\in\P({}^n H)$, \cite[Theorem~1.16]{HJ}. These results imply that $\P({}^n X)$ and $\L^s({}^n X)$ are isomorphic Banach spaces for every Banach space $X$ and that they are indeed isometric when $X=H$ is a Hilbert space.

We shall next discuss the crucial fact mentioned in the Introduction that $\P({}^n X)$ naturally embeds into $\Lip(B_X)$, for every Banach space $X$.

\begin{fact}\label{fact: poly are Lip} Let $X$ be any Banach space and $P\in \P({}^n X)$. Then

$$n\|P\|_\P\leq \left\|P\cut_{B_X}\right\|_\Lip\leq n\left\|\widecheck{P} \right\|_\L.$$
\end{fact}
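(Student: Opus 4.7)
The plan is to prove the two inequalities separately; both are elementary, the first being a multilinear telescoping identity and the second a one-variable computation along a ray through the origin.

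For the upper bound, I would start from the polarisation identity $P(x)=\widecheck{P}(x,\dots,x)$ and use multilinearity to telescope the difference $P(x)-P(y)$ for $x,y\in B_X$ as
$$
P(x)-P(y)=\sum_{k=0}^{n-1}\widecheck{P}\bigl(\underbrace{x,\dots,x}_{n-1-k},\,x-y,\,\underbrace{y,\dots,y}_{k}\bigr).
$$
Each summand has modulus at most $\|\widecheck{P}\|_\L\cdot\|x\|^{n-1-k}\|x-y\|\|y\|^{k}\leq\|\widecheck{P}\|_\L\|x-y\|$, because $x,y\in B_X$. Summing the $n$ terms yields $|P(x)-P(y)|\leq n\|\widecheck{P}\|_\L\|x-y\|$, which is exactly $\|P\cut_{B_X}\|_{\Lip}\leq n\|\widecheck{P}\|_\L$.

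For the lower bound, I would fix an arbitrary $x\in S_X$ (the unit sphere) and exploit $n$-homogeneity along the segment $[0,x]$. For $t\in[0,1)$,
$$
\frac{|P(x)-P(tx)|}{\|x-tx\|}=\frac{|1-t^n|\,|P(x)|}{1-t}=\bigl(1+t+\dots+t^{n-1}\bigr)|P(x)|.
$$
Letting $t\to1^{-}$, the quotient tends to $n|P(x)|$, so $\|P\cut_{B_X}\|_{\Lip}\geq n|P(x)|$. Taking the supremum over $x\in S_X$ and invoking $\|P\|_\P=\sup_{x\in S_X}|P(x)|$ (this follows from $n$-homogeneity, since $|P(rx)|=r^n|P(x)|\leq|P(x)|$ for $r\in[0,1]$) gives $\|P\cut_{B_X}\|_{\Lip}\geq n\|P\|_\P$.

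The two inequalities together are the claim. No serious obstacle is expected: the upper bound is the standard "derivative of an $n$-homogeneous polynomial is $n$ times its associated symmetric form evaluated on $(x,\dots,x,\cdot)$" argument dressed up combinatorially so as to avoid explicit differentiation, while the lower bound is a one-dimensional observation that the factor $n$ comes out as the derivative at $t=1$ of $t\mapsto t^n$. Only care is needed to note that $P$ need not attain its norm on $S_X$ when $X$ is infinite-dimensional, which is why the sup must be taken before passing to the limit in $t$.
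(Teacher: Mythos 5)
Your proof is correct and follows essentially the same route as the paper: the upper bound via the identical multilinear telescoping of $P(x)-P(y)$, and the lower bound by restricting $P$ to a segment through the origin and extracting the factor $n$ from the one-variable function $t\mapsto t^nP(x)$ (the paper reads off the Lipschitz constant of this function on $[-1,1]$ directly, while you compute the difference quotient and let $t\to1^-$, which amounts to the same thing). No gaps.
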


\begin{proof} For fixed $x\in B_X$, the function $[-1,1]\ni t\mapsto P(tx)= t^n P(x)$ has Lipschitz constant equal to $n \cdot|P(x)|$. Therefore, $Lip(P\cut_{B_X})\geq n \cdot|P(x)|$ and the former inequality follows. For the latter, fix $x,y\in B_X$ and write

\begin{equation*}
\begin{split}
P(x) - P(y)&= \widecheck{P}(x,\dots,x) - \widecheck{P}(y,\dots,y)\\
&= \sum_{k=1}^n \widecheck{P}(\underbrace{x,\dots,x} _\text{$k$-many},y,\dots,y) - \widecheck{P}(\underbrace{x,\dots,x} _\text{$(k-1)$-many},y,\dots,y)\\
&= \sum_{k=1}^n \widecheck{P}(x,\dots,x,\underbrace{x-y} _\text{$k$-th place},y,\dots,y).
\end{split}
\end{equation*}
Thus, $|P(x)-P(y)|\leq \sum_{k=1}^n \left\|\widecheck{P}\right\|_\L \|x\|^{k-1} \|y\|^{n-k} \|x-y\| \leq n\left\|\widecheck{P}\right\|_\L \|x-y\|.$
\end{proof}

Therefore, it follows from Fact \ref{fact: poly are Lip} and (\ref{eq: equiv poly multi}) that the map $P\mapsto P\cut_{B_X}$ defines an isomorphic embedding of $\P({}^n X)$ into $\Lip(B_X)$. In the case of a Hilbert space, we have $n\|P\|_\P= \|P\cut_{B_X} \|_\Lip$; hence, such an embedding is additionally a multiple of an isometric one. Throughout the paper, when we consider $\P({}^n X)$ as a subspace of $\Lip(B_X)$ this embedding will be the one under consideration.

\subsection{Averaging and invariant projections}\label{sec: average}
It is a standard and useful fact that projections of minimal norm tend to respect the symmetries of the Banach spaces they act on (see, \emph{e.g.}, \cite{Positselskii, Rieffel, Rudin}). In this part, we shall recall the version of this result that we need; we also sketch a proof, for the sake of completeness.\smallskip 

Let us denote by $(E,\en)$ the Euclidean space $(\R^n,\n_2)$ and by $\On$ its orthogonal group \cite[\S~1.1]{Chevalley}. Since $\On$ is a compact topological group, we can select a normalised Haar measure, which we denote $\mu$, on it \cite[Chapter~4]{HewittRoss}.

Given a projection $Q\colon \C(B_E)\to \P({}^2E)$, we shall consider the projection $\tilde{Q}\colon \C(B_E)\to \P({}^2E)$ defined by

$$\tilde{Q}(f):=\int_{\omega\in\On} Q(f\circ\omega)\circ\omega^{-1}\, \d\mu(\omega)\qquad (f\in\C(B_E)).$$
We first observe that $\tilde{Q}(f)$ is well defined as a Bochner integral of the function $\omega\mapsto Q(f\circ\omega) \circ\omega^{-1}$, from $\On$ to $\P({}^2E)$. Indeed, it suffices to check that the said function is continuous.

\begin{fact} The map $\omega\mapsto Q(f\circ\omega) \circ\omega^{-1}$ is continuous.
\end{fact}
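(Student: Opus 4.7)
The plan is to factor the map $\omega\mapsto Q(f\circ\omega)\circ\omega^{-1}$ as a composition of continuous pieces: first the pull-back $\omega\mapsto f\circ\omega$ from $\On$ into $\C(B_E)$; then the bounded linear operator $Q\colon \C(B_E)\to\P({}^2E)$; and finally the composition action $(P,\omega)\mapsto P\circ\omega^{-1}$ from $\P({}^2E)\times\On$ into $\P({}^2E)$. Continuity of the middle piece is immediate from linearity and boundedness of $Q$, so everything reduces to the continuity of the first and third pieces.

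For the first piece, I would use that the norm on $\C(B_E)$ equals $\|\grad f\|_\infty$. Since each $\omega\in\On$ is a linear isometry, $\omega(B_E)=B_E$, and the chain rule gives $\grad(f\circ\omega)(x)=\omega^T\grad f(\omega x)$; in particular $f\circ\omega\in\C(B_E)$. For $\omega_k\to\omega$ in $\On$, I would decompose
\begin{equation*}
\grad(f\circ\omega_k)(x)-\grad(f\circ\omega)(x)=\omega_k^T\bigl[\grad f(\omega_k x)-\grad f(\omega x)\bigr]+(\omega_k^T-\omega^T)\grad f(\omega x).
\end{equation*}
The first summand tends to $0$ uniformly in $x\in B_E$: indeed $\|\omega_k^T\|_{\mathrm{op}}=1$, one has $\|\omega_k x-\omega x\|\leq\|\omega_k-\omega\|_{\mathrm{op}}\to 0$ uniformly, and $\grad f$ is uniformly continuous on $B_E$. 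The second summand is bounded by $\|\omega_k-\omega\|_{\mathrm{op}}\cdot\|\grad f\|_\infty\to 0$. Hence $f\circ\omega_k\to f\circ\omega$ in $\C(B_E)$.

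For the third piece, let $\widecheck{P}$ denote the symmetric bilinear form associated with $P\in\P({}^2E)$. Since $\|P\circ\omega^{-1}\|_{\P}=\|P\|_{\P}$ for every $\omega\in\On$ (the orthogonal transformations preserve $B_E$), continuous dependence on $P$ is automatic. For the dependence on $\omega$, bilinearity gives
\begin{equation*}
P(\omega_k^{-1}x)-P(\omega^{-1}x)=\widecheck{P}(\omega_k^{-1}x-\omega^{-1}x,\omega_k^{-1}x)+\widecheck{P}(\omega^{-1}x,\omega_k^{-1}x-\omega^{-1}x),
\end{equation*}
whose absolute value on $B_E$ is bounded by $2\|\widecheck{P}\|_\L\cdot\|\omega_k^{-1}-\omega^{-1}\|_{\mathrm{op}}\to 0$, using the continuity of inversion on the topological group $\On$.

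The main obstacle is the first step: one must crucially exploit the defining property of $\C(B_E)$ that $\grad f$ is uniformly continuous, not merely bounded. Without this, the argument above would yield only pointwise (or at best sup-norm) convergence of $f\circ\omega_k$ to $f\circ\omega$, which is strictly weaker than convergence in the Lipschitz norm. This is precisely the reason, emphasised by the authors just after Theorem~\ref{mth: finite-dim}, that the averaging argument is naturally set in $\C(B_E)$ rather than in the larger space $\Lip(B_E)$.
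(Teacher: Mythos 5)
Your proof is correct and follows essentially the same route as the paper: the heart of both arguments is the identical two-term estimate for $\|\grad(f\circ\omega_k)-\grad(f\circ\omega)\|_\infty$, combining the uniform continuity of $\grad f$ with the bound $\|\grad f\|_\infty\cdot\|\omega_k-\omega\|$. You additionally spell out the remaining composition steps (boundedness of $Q$ and joint continuity of $(P,\omega)\mapsto P\circ\omega^{-1}$), which the paper treats as routine by reducing at once to the continuity of $\omega\mapsto f\circ\omega$ at the identity.
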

\begin{proof} We only need to show that the map $\omega\mapsto f\circ\omega$, from $\On$ to $\C(B_E)$, is continuous at the identity matrix $\mathbbm{1}$. For every $x\in B_E$ we have

\begin{equation*}
\begin{split}
|\grad(f\circ\omega)(x)-\grad f(x)| &= |\grad f(\omega x)\omega -\grad f(x)|\\
&\leq |\grad f(\omega x)\omega -\grad f(\omega x)| +|\grad f(\omega x) -\grad f(x)|\\
&\leq \|\grad f\|_\infty\cdot \|\omega-\mathbbm{1}\| + M_{\grad f}(\|\omega -\mathbbm{1}\|),
\end{split}
\end{equation*}
where $M_{\grad f}$ denotes the modulus of continuity\footnote{We don't use the standard notation $\omega$ for a modulus of continuity as it would conflict with $\omega\in\On$.} of $\grad f$. Thus, as $\omega\to \mathbbm{1}$,

$$\|\grad (f\circ\omega- f)\|_\infty\leq \|\grad f\|_\infty\cdot \|\omega-\mathbbm{1}\| + M_{\grad f}(\|\omega -\mathbbm{1}\|) \to0.$$
\end{proof}

Once we know that $\tilde{Q}$ is well defined, it is straightforward to verify the following properties (see \cite[Theorem III.B.13]{Woj}). Property (i) claims that $\tilde{Q}$ is \emph{invariant under $\On$}.
\begin{lemma}[\cite{Rudin}]\label{lemma: average Q} $\tilde{Q}$ is a projection from $\C(B_E)$ onto $\P({}^2E)$. Moreover,
\begin{romanenumerate}
\item $\tilde{Q}(f\circ\omega)=\tilde{Q}(f)\circ\omega$ for each $f\in\C(B_E)$ and $\omega\in\On$;
\item $\|\tilde{Q}\|\leq \|Q\|$.
\end{romanenumerate}
\end{lemma}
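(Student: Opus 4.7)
The plan is to establish the three assertions in turn, relying on two standard features of the setup: the $\On$-invariance of both $\P({}^2E)$ and of the Lipschitz norm on $\C(B_E)$, together with the left-invariance of the Haar measure $\mu$.

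First I would verify that $\tilde{Q}$ takes values in $\P({}^2E)$. For every $\omega\in\On$, the map $x\mapsto\omega^{-1}x$ is linear, so if $P\in\P({}^2E)$ then $P\circ\omega^{-1}\in\P({}^2E)$; hence the integrand $\omega\mapsto Q(f\circ\omega)\circ\omega^{-1}$ is a continuous function from $\On$ into the (finite-dimensional, hence closed) subspace $\P({}^2E)\subseteq\C(B_E)$. The Bochner integral therefore lies in $\P({}^2E)$. To see that $\tilde{Q}$ is a projection, note that for any $P\in\P({}^2E)$ and any $\omega\in\On$ we have $P\circ\omega\in\P({}^2E)$, so $Q(P\circ\omega)=P\circ\omega$, and hence $Q(P\circ\omega)\circ\omega^{-1}=P$. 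Integrating against the normalised measure $\mu$ gives $\tilde{Q}(P)=P$.

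For item (i), fix $\eta\in\On$ and compute
\[
\tilde{Q}(f\circ\eta)=\int_{\On}Q(f\circ(\eta\omega))\circ\omega^{-1}\,\d\mu(\omega).
\]
The substitution $\omega'=\eta\omega$, combined with the left-invariance of Haar measure, yields $\omega^{-1}=(\omega')^{-1}\eta$ and $\d\mu(\omega)=\d\mu(\omega')$, so
\[
\tilde{Q}(f\circ\eta)=\int_{\On}\bigl(Q(f\circ\omega')\circ(\omega')^{-1}\bigr)\circ\eta\,\d\mu(\omega')=\tilde{Q}(f)\circ\eta,
\]
where the last step uses that right composition with the fixed map $\eta$ commutes with the Bochner integral (it is a bounded linear operator on $\C(B_E)$).

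For item (ii), the key observation is that each $\omega\in\On$ is a linear isometry of $E$ sending $B_E$ onto itself, so $g\mapsto g\circ\omega$ is a linear isometry of $\C(B_E)$ (equivalently, of $\Lip(B_E)$). Consequently, for every $f\in\C(B_E)$,
\[
\bigl\|Q(f\circ\omega)\circ\omega^{-1}\bigr\|_{\Lip}=\|Q(f\circ\omega)\|_{\Lip}\leq\|Q\|\cdot\|f\circ\omega\|_{\Lip}=\|Q\|\cdot\|f\|_{\Lip},
\]
uniformly in $\omega$. Applying the standard Bochner-integral estimate and using $\mu(\On)=1$ yields $\|\tilde{Q}(f)\|_{\Lip}\leq\|Q\|\cdot\|f\|_{\Lip}$. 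No step is particularly delicate; the only place requiring a bit of care is confirming that the change of variable in item (i) really produces the claimed identity, and that the invariance used in item (ii) applies to the Lipschitz norm (rather than just the sup norm), which it does because precomposition with an isometry preserves Lipschitz constants.
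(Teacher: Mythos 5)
Your proof is correct and is precisely the standard averaging argument that the paper itself omits as ``straightforward to verify'' (citing Rudin and \cite[Theorem~III.B.13]{Woj}): the range and projection properties follow from the $\On$-invariance of $\P({}^2E)$, (i) from left-invariance of the Haar measure, and (ii) from the fact that precomposition with an orthogonal map is an isometry of $\C(B_E)$ together with the triangle inequality for the Bochner integral. Nothing is missing, and no comparison is needed since the paper provides no alternative argument.
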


Moreover, recall that Bochner integrals commute with bounded linear operators. Thus for every bounded linear operator $T\colon \P({}^2 E)\to Y$ one has

$$T\left(\tilde{Q}f\right)= \int_{\omega\in\On} T\left( Q(f\circ\omega)\circ\omega^{-1}\right)\, \d\mu(\omega)\qquad (f\in\C(B_E)).$$
In particular, this also yields the pointwise formula

$$\tilde{Q}(f)(x)=\int_{\omega\in\On} Q(f\circ\omega)(\omega^{-1}x)\, \d\mu(\omega)\qquad (x\in B_E).$$

Similarly as the average of a projection, we can also define the average of a function $f\in \C(B_E)$ to be the function

$$\tilde{f}:=\int_{\omega\in\On} f\circ\omega\, \d\mu(\omega).$$
As before, we see that $\tilde{f}$ is well defined as a Bochner integral. Hence $\tilde{f}\in\C(B_E)$ and

$$\grad\tilde{f}=\int_{\omega\in\On} \grad\left(f\circ\omega \right)\, \d\mu(\omega).$$
Moreover, we easily see that $Lip(\tilde{f})\leq Lip(f)$ and that $\tilde{f}$ is invariant under $\On$, in the sense that $\tilde{f}\circ \omega=\tilde{f}$ for every $\omega\in\On$. Finally, $\tilde{f}=f$ if $f$ is invariant under $\On$.

%-----------------------------------------------------------%
%                                                           %
% 						PROOF OF TH A 						%
%                                                           %
%-----------------------------------------------------------%
\section{Proof of Theorem \ref{mth: finite-dim}}\label{sec: Th A}
This section is dedicated to the proof of the finite-dimensional result Theorem \ref{mth: finite-dim}. Before entering the details of the argument, we shall sketch here the the main steps of the proof and the very idea behind it. We will construct a $\mathcal{C}^1$ function $\psi\colon B_{\R^2}\to\R$ (hence Lipschitz) which is equal to zero on a neighbourhood of the coordinate axes and whose average over all rotations of the plane is close to a `large' multiple of the polynomial $N_2:=(\n_2)^2$. Such a function can be considered as the $2$-dimensional analogue of the function $\ro\colon [-1,1]\to\R$ defined by $\ro(x):=(|x|-2\e)^2 \mathbbm{1}_{[2\e,1]}(|x|)$ ($x\in [-1,1]$); actually, the radial behaviour of $\psi$ will be determined by $\ro$.

We will then define a $\mathcal{C}^1$ function $\Psi\colon B_{\R^n}\to \R$ by means of $\psi$. We shall show that the Lipschitz constant of $\Psi$ is bounded uniformly in $n$ (but depending on $\e$), while the projection of $\Psi$ has a Lipschitz constant that grows with $n$.

\begin{proof}[Proof of Theorem \ref{mth: finite-dim}] For the sake of simplicity, we shall just denote by $E$ the $n$-dimensional Euclidean space $E_n=(\R^n,\n_2)$ and $\en$ its norm. Let

$$K_n:=\inf\left\{\|Q\|\colon Q \text{ is a projection from } \C(B_E) \text{ onto } \P({}^2E) \right\}.$$

Since $\P({}^2E)$ is a finite-dimensional Banach space, a standard compactness argument shows that $K_n$ is attained, \cite[Theorem 3]{IsbellSemadeni} (see, \emph{e.g.}, the argument in \cite[Lemma 5.17.(iii)]{FHHMZ}, or \cite[p.~12]{Lind ext cpt}). Therefore, we can pick a projection $Q$ from $\C(B_E)$ onto $\P({}^2E)$ such that $\|Q\|=K_n$. Moreover, Lemma \ref{lemma: average Q} allows us to assume additionally that $Q$ is invariant under $\On$.

We are now ready to start the first step of the proof.

\bigskip
\textbf{Step 1.} \textit{A function in the plane.}\medskip

We shall define a $\mathcal{C}^1$ function $\psi\colon B_{\R^2}\to\R$ with some symmetries and with the property that, if $\psi(x,y)\neq0$, then $|x|,|y|\geq\e$. The function $\psi$ will be defined in polar coordinates, as follows.

Fix a parameter $\e>0$ (whose value will be chosen at the end of the argument in \textbf{Step 5}). Consider the functions $\ro\colon [0,1]\to\R$ and $\tau_0\colon [0,\pi/2]\to\R$ defined by

\begin{align*}
\ro(r) &:=\begin{cases} (r-2\e)^2 & r\geq 2\e\\
0 & r<2\e \end{cases} \qquad(r\in[0,1])\\
\tau_0(\theta) &:=\max\left\{\frac{\pi}{12}-\left|\theta- \frac{\pi}{4}\right|,0 \right\} \qquad (\theta\in[0,\pi/2]).
\end{align*}

Since $\tau_0$ is not $\mathcal{C}^1$, we approximate it by a smooth function that shares some of its properties. More precisely, we fix $\delta>0$ with $\delta<\pi/72$ and we pick a $\mathcal{C}^1$ function $\tau\colon [0,\pi/2]\to\R$ such that
\begin{romanenumerate}
    \item $\tau\geq 0$ and $\tau_0- \delta\leq\tau \leq\tau_0$,
    \item $\tau$ is symmetric with respect to $\theta=\pi/4$,
    \item $Lip(\tau)\leq 1$.
\end{romanenumerate}
Such a function $\tau$ can be easily obtained by a standard convolution argument. The function $\psi$ is the function whose expression in polar coordinates in the first quadrant is given by $\ro\cdot\tau$. More precisely, we set

$$\psi(x,y):=\ro\left(\sqrt{x^2+y^2}\right)\cdot \tau\left(\arctg\left|\frac{x}{y}\right| \right) \qquad((x,y)\in B_{\R^2}).$$

\begin{fact}\label{fact: prop of psi} The function $\psi\colon B_{\R^2}\to\R$ has the following properties:
\begin{romanenumerate}
    \item $\psi(x,y)=\psi(|x|,|y|)=\psi(y,x)$,
    \item $\psi(x,y)\neq0$ only when $|x|,|y|\geq\e$,
    \item $\psi\in \C(B_{\R^2})$ and $Lip(\psi)\leq2$.
\end{romanenumerate}
\end{fact}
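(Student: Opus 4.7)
The plan is to verify the three claims separately; (i) and (ii) follow essentially by inspection from the definition, and (iii) is the only step requiring real work.

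For (i), I would note that $\ro(\sqrt{x^2+y^2})$ depends only on $|x|,|y|$, and that $\arctg|x/y|+\arctg|y/x|=\pi/2$, so the symmetry of $\tau$ about $\pi/4$ gives $\tau(\arctg|x/y|)=\tau(\arctg|y/x|)$. For (ii), when $\psi(x,y)\ne 0$ both factors are nonzero: the first forces $\sqrt{x^2+y^2}>2\e$, while $\tau\le \tau_0$ (which is supported in $[\pi/6,\pi/3]$) forces $\arctg|x/y|\in (\pi/6,\pi/3)$, i.e.\ $1/\sqrt 3<|x|/|y|<\sqrt 3$; combined, these yield $|x|,|y|>\sqrt{x^2+y^2}/2>\e$.

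For (iii), I would address regularity first. The function $\psi$ is smooth on the interior of each quadrant as a composition of $\mathcal{C}^1$ maps. In a $2\e$-ball around the origin $\psi\equiv 0$, since $\ro\equiv 0$ on $[0,2\e]$. Near a point of the positive $x$-axis away from the origin, the inner argument $\arctg|x/y|$ tends to $\pi/2$ as $y\to 0$, which lies in the flat region $[\pi/3,\pi/2]$ of $\tau$; hence $\psi\equiv 0$ on an open neighborhood of that part of the axis, and so $\psi$ extends $\mathcal{C}^1$-smoothly across it. The sign and swap symmetries from (i) propagate this to the full disk.

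For the Lipschitz estimate, I would work in polar coordinates on the open first quadrant. Since $\arctg|x/y|=\pi/2-\theta$ there and $\tau(\pi/2-\theta)=\tau(\theta)$ by symmetry, $\psi$ reduces to $\ro(r)\tau(\theta)$, whence
$$|\grad\psi|^2=\ro'(r)^2\tau(\theta)^2+\frac{\ro(r)^2\tau'(\theta)^2}{r^2}.$$
Four elementary bounds finish the computation: $|\ro'|\le 2$, $|\tau|\le \pi/12$ (the maximum of $\tau_0$), $|\tau'|\le 1$, and $\ro(r)/r\le (1-2\e)^2\le 1$ (since $(r-2\e)^2/r$ is increasing on $[2\e,1]$). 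Putting these together yields $|\grad\psi|\le \sqrt{\pi^2/36+1}<2$, and (i) extends the bound to all of $B_{\R^2}$. The main obstacle I anticipate is not any single estimate but the clean verification that $\tau(\arctg|x/y|)$ really is $\mathcal{C}^1$ across the coordinate axes despite the apparent singularity of $\arctg|x/y|$ there; the flatness of $\tau$ near $0$ and $\pi/2$ resolves this by forcing the composition to vanish on an open neighborhood of each axis.
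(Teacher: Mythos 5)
Your proposal is correct and follows essentially the same route as the paper's proof: (i) and (ii) are read off from the symmetry and supports of $\tau$ and $\ro$, and (iii) is the polar-coordinate gradient bound $|\grad\psi|^2\leq \ro'(r)^2\tau(\theta)^2+\ro(r)^2\tau'(\theta)^2/r^2\leq \pi^2/36+1<4$ using the same four elementary estimates. Your treatment is in fact slightly more careful than the paper's on two minor points --- the $\mathcal{C}^1$ extension across the coordinate axes via the vanishing of $\tau$ near $0$ and $\pi/2$, and the reconciliation of $\arctg|x/y|=\pi/2-\theta$ with the polar angle via the symmetry of $\tau$ --- both of which the paper dispatches with a one-line remark.
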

\begin{proof}[Proof of Fact \ref{fact: prop of psi}] \renewcommand\qedsymbol{$\square$} (i) The fact that $\psi(x,y)=\psi(y,x)$ is consequence of the symmetry of $\tau$ with respect to $\theta=\pi/4$. The equality $\psi(x,y)=\psi(|x|,|y|)$ is obvious by definition.

(ii) If $\psi(x,y)\neq0$, then $x^2+y^2\geq4\e^2$ and $\frac{1}{\sqrt{3}}|x|\leq |y|\leq \sqrt{3}|x|$. Substituting $|y|\leq \sqrt{3}|x|$ into the first inequality yields $|x|\geq\e$; similarly, one gets $|y|\geq\e$.

(iii) The fact that $\psi$ is $\mathcal{C}^1$ on $B_{\R^2}$ is clear (the smoothness in the points of the two axes follows, \emph{e.g.}, from (ii)). Thus, it suffices to show that $|\grad\psi(x,y)|\leq2$ for each $(x,y)\in B_{\R^2}$. Via the expression for the gradient in polar coordinates, we get

\begin{equation*}
\begin{split}
    |\grad\psi(x,y)|^2 &= \left(\ro'\left(\sqrt{x^2+y^2}\right)\cdot \tau\left(\arctg \left|\frac{y}{x}\right|\right) \right)^2 + \frac{1}{x^2+y^2} \left(\ro\left(\sqrt{x^2+y^2}\right)\cdot \tau'\left(\arctg \left|\frac{y}{x}\right|\right) \right)^2 \\
    &\leq (2\cdot\pi/12)^2 + \frac{1}{x^2+y^2}\left(\sqrt{x^2+y^2} \right)^2\leq \pi^2/36 +1 \leq 4.
\end{split}
\end{equation*}
\end{proof}

\bigskip
\textbf{Step 2.} \textit{The functions $\Psi$ and $\Psi_d$.}\medskip

We shall now pass to defining functions on $B_E$. Fix indices $1\leq i<j\leq n$ and define $\psi_{ij}\colon B_E\to \R$ by

$$\psi_{ij}(x):=\psi(x_i,x_j)\qquad (x=(x_1,\dots,x_n)\in B_E).$$
Next, we set $d:=\lfloor n/\sqrt{2}\rfloor$ and we define the $\mathcal{C}^1$ functions

$$\Psi:=\sum_{i<j\leq n}\psi_{ij}\qquad\text{and}\qquad \Psi_d:=\sum_{i<j\leq d}\psi_{ij}.$$

The proof will now consist in showing that the Lipschitz constants of $\Psi$ and $\Psi_d$ can be bounded uniformly in $n$ (but depending on $\e$). On the other hand, the Lipschitz constants of $Q(\Psi)$ and $Q(\Psi_d)$ grow with $n$. This latter fact will be shown in the subsequent steps, while we now show the former clause. The main reason behind it is that only `few' of the functions $\psi_{ij}$ can be nonzero at a given point, since only `few' coordinates of $x\in B_E$ can be larger than $\e$ in absolute value. 

\begin{fact}\label{fact: Lip Psi} $Lip(\Psi), Lip(\Psi_d)\leq 1/\e^4$.
\end{fact}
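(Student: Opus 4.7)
The plan is to compute the Lipschitz constant as $\sup_{x \in B_E^{\mathsf{o}}} |\grad \Psi(x)|$ (legitimate since $\Psi$ is $\mathcal{C}^1$) and to exploit the fact that, at any fixed point $x \in B_E$, most summands $\psi_{ij}$ contribute nothing to $\grad \Psi(x)$.

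First I would observe that $\grad \psi(u,v)$ vanishes whenever $|u|\leq\e$ or $|v|\leq\e$. Indeed, Fact \ref{fact: prop of psi}(ii) shows that $\psi$ is identically zero on the open set $\{|u|<\e\}\cup\{|v|<\e\}$, so its gradient vanishes there, and the continuity of $\grad\psi$ extends this to the closure $\{|u|\leq\e\}\cup\{|v|\leq\e\}$. Since $\psi_{ij}$ depends only on the $i$-th and $j$-th coordinates, $\grad \psi_{ij}(x)\neq 0$ forces both $|x_i|\geq\e$ and $|x_j|\geq\e$.

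Next, for any $x\in B_E$ the set $S(x):=\{k: |x_k|\geq\e\}$ satisfies $|S(x)|\cdot\e^2\leq \sum_k x_k^2\leq 1$, hence $|S(x)|\leq 1/\e^2$. So at most $\binom{|S(x)|}{2}\leq |S(x)|^2/2 \leq 1/(2\e^4)$ of the terms $\grad \psi_{ij}(x)$ in the sum defining $\grad\Psi(x)$ can be nonzero. Combining this counting estimate with the pointwise bound $|\grad\psi_{ij}(x)|=|\grad\psi(x_i,x_j)|\leq 2$ provided by Fact \ref{fact: prop of psi}(iii), the triangle inequality yields
$$|\grad\Psi(x)|\leq \sum_{\substack{i<j \\ i,j\in S(x)}} |\grad \psi_{ij}(x)| \leq 2\cdot \frac{1}{2\e^4}=\frac{1}{\e^4}.$$
The bound for $\Psi_d$ follows from the very same reasoning applied to the restricted index set $\{1,\dots,d\}$, since $\Psi_d$ is a sub-sum of $\Psi$ and the number of nonzero terms at $x$ is still bounded by $\binom{|S(x)\cap\{1,\dots,d\}|}{2}\leq 1/(2\e^4)$. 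I do not anticipate any real difficulty here: the whole argument hinges on the elementary pigeonhole bound $|S(x)|\leq 1/\e^2$ on the unit ball together with the vanishing of $\psi$ near the coordinate axes, both of which are already at our disposal.
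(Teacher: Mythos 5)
Your proof is correct and rests on exactly the same idea as the paper's: the pigeonhole bound that at most $1/\e^2$ coordinates of a point of $B_E$ can have absolute value at least $\e$, so at most $\tfrac{1}{2\e^4}$ of the $2$-Lipschitz summands $\psi_{ij}$ are active near any given point. The only (cosmetic) difference is that you run the count pointwise on $\grad\Psi$, whereas the paper counts the summands that are nonzero on a small ball around $x$ (using a threshold $\e/(1+\xi)$ and letting $\xi\to0$); both executions are valid.
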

\begin{proof}[Proof of Fact \ref{fact: Lip Psi}] \renewcommand\qedsymbol{$\square$} Fix any $x\in B_E$, any $\xi>0$ and let $I:=\left\{i=1,\dots,n \colon |x_i|\geq \frac{\e}{1+\xi}\right\}$. Observe that $|I|\leq \left(\frac{1+\xi}{\e}\right)^2$. Indeed,

$$1\geq \sum_{i=1}^n x_i^2 \geq \sum_{i\in I} \left(\frac{\e}{1+\xi}\right)^2 = \left(\frac{\e}{1+\xi}\right)^2 \cdot|I|.$$

Now pick any $y\in B_E$ with $\|x-y\|\leq \frac{\e\xi}{1+\xi}$ and assume that $\psi_{ij}(y)\neq0$. Fact \ref{fact: prop of psi} yields $|y_i|,|y_j|\geq\e$, whence $|x_i|,|x_j|\geq \frac{\e}{1+\xi}$. Therefore, $i,j\in I$; in other words, in the ball $B(x,\frac{\e\xi}{1+\xi})$ only the functions $\psi_{ij}$ with $i,j\in I$ are different from $0$. Consequently, $\Psi$ is locally the sum of at most $\frac{1}{2}\cdot \left(\frac{1+\xi}{\e}\right)^4$ functions whose Lipschitz constant is at most $2$. It follows that $\Psi$ is $\left(\frac{1+\xi}{\e}\right)^4$-Lipschitz and letting $\xi\to0$ proves the claim for $\Psi$.

The argument for $\Psi_d$ is identical.
\end{proof}

\bigskip
\textbf{Step 3.} \textit{Computation of $Q(\Psi)$ and $Q(\Psi_d)$ and estimate of  $\max\left\{Lip(Q(\Psi)), Lip(Q(\Psi_d)) \right\}$.}\medskip

We start by giving formulas for the $2$-homogeneous polynomials $Q(\Psi)$ and $Q(\Psi_d)$ exploiting the symmetries of $\Psi$ and $\Psi_d$ and the invariance of $Q$. Since $\psi_{ij}$ is a rotation of $\psi_{12}$, it suffices to compute $Q(\psi_{12})$.

Let $(a_{ij})_{i,j=1}^n$ be scalars with $a_{ij}= a_{ji}$ and such that

$$Q(\psi_{12})(x)= \sum_{i,j=1}^n a_{ij}\cdot x_i x_j.$$

Fix $k=1,\dots,n$ and let $\omega_k$ be the reflection $(x_1,\dots,x_n)\mapsto (x_1,\dots,x_{k-1},-x_k,x_{k+1},\dots, x_n)$. Since $\psi_{12}$ is invariant under $\omega_k$, the same is true for $Q(\psi_{12})$, namely

$$\sum_{i,j=1}^n a_{ij}\cdot x_i x_j=  \sum_{\substack{i,j=1\\ i,j\neq k}}^n a_{ij}\cdot x_i x_j + a_{kk}\cdot x_k^2 -2 \sum_{\substack{i=1\\ i\neq k}}^n a_{ik}\cdot x_i x_k.$$
Therefore, $a_{ik}=0$, whenever $i\neq k$. Since $k=1,\dots,n$ was arbitrary, it follows that $a_{ij}=0$ for distinct $i,j=1,\dots,n$; hence $Q(\psi_{12})(x)= \sum_{i=1}^n a_{ii}\cdot x_i^2$.

Next, the invariance of $\psi_{12}$ under the reflection $(x_1,\dots,x_n)\mapsto (x_2,x_1,x_3,\dots, x_n)$ implies that $a_{11}=a_{22}$. Similarly, the invariance of $\psi_{12}$ under the reflection that permutes the coordinates $i$ and $j$ ($i,j=3,\dots,n$) yields that $a_{33}=\dots=a_{nn}$.

In conclusion, there are scalars $\alpha$ and $\beta$ such that

\begin{equation}\label{eq: Q psi12}
Q(\psi_{12})(x)= \alpha \left(x_1^2+x_2^2\right) +\beta \left(x_3^2+\dots + x_n^2\right).    
\end{equation}

This also implies

$$Q(\psi_{ij})(x)= \alpha \left(x_i^2+x_j^2\right) +\beta \sum_{\substack{\ell=1\\ \ell\neq i,j}}^n x_\ell^2.$$

Consequently, we can now compute

\begin{equation*}
\begin{split}
Q(\Psi)(x) &= \sum_{i<j\leq n}\left[\alpha \left(x_i^2+x_j^2\right) +\beta \sum_{\substack{\ell=1\\ \ell\neq i,j}}^n x_\ell^2 \right] \\
&= \left[\alpha(n-1) + \beta \frac{(n-1)(n-2)}{2}\right] \left(x_1^2+\dots+x_n^2\right).
\end{split}
\end{equation*}

Letting $N\in\P({}^2E)$ be the square of the norm, \emph{i.e.}, the polynomial $N(x):=x_1^2+\dots+x_n^2$, we rewrite the above as

\begin{equation}\label{eq: Q Psi}
Q(\Psi) = \left[\alpha(n-1) + \beta \frac{(n-1)(n-2)}{2}\right] \cdot N.
\end{equation}

Similarly, we compute $Q(\Psi_d)$.

\begin{equation*}
\begin{split}
Q(\Psi_d)(x) &= \sum_{i<j\leq d}\left[\alpha \left(x_i^2+x_j^2\right) +\beta \sum_{\substack{\ell=1\\ \ell\neq i,j}}^n x_\ell^2 \right] \\
&= \left[\alpha(d-1) + \beta \frac{(d-1)(d-2)}{2}\right] \left(x_1^2+\dots+x_d^2\right) + \beta \frac{d(d-1)}{2} \left(x_{d+1}^2+\dots+x_n^2\right).
\end{split}
\end{equation*}

As before, we can set $N_d(x):=x_1^2+\dots+x_d^2$ and obtain

\begin{equation}\label{eq: Q Psi d}
Q(\Psi_d) = \left[\alpha(d-1) + \beta \frac{(d-1)(d-2)}{2} \right] \cdot N_d + \beta\frac{d(d-1)}{2}(N-N_d).
\end{equation}

Since the polynomials $N$ and $N_d$ are $2$-Lipschitz on $B_E$, the equations (\ref{eq: Q Psi}) and (\ref{eq: Q Psi d}) give

\begin{align}\label{eq: Lip Q Psi}
Lip(Q(\Psi)) &= 2\left| \alpha(n-1) + \beta \frac{(n-1)(n-2)}{2} \right| \\ \label{eq: Lip Q Psi d}
Lip(Q(\Psi_d)) &\geq 2\left|\alpha(d-1) + \beta \frac{(d-1)(d-2)}{2} \right|.
\end{align}
Note that in the second estimate we used the fact that $Lip(Q(\Psi_d))\geq Lip\left((Q(\Psi_d))\cut_{\R^d}\right)$. \smallskip

We are now in a position to give a better estimate that only depends on $\alpha$ (and not on $\beta$).
\begin{claim}\label{claim: max Lip Q Psi and Psi d} The polynomials $Q(\Psi)$ and $Q(\Psi_d)$ satisfy

\begin{equation}\label{eq: max Lip Q Psi and Psi d}
    \max\left\{Lip(Q(\Psi)), Lip(Q(\Psi_d)) \right\}\geq \frac{2}{3}\left(\sqrt{2}-1\right)|\alpha|\left(n- 2\sqrt{2}\right).
\end{equation}
\end{claim}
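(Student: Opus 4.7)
The plan is to eliminate the unknown $\beta$ from the two estimates (\ref{eq: Lip Q Psi}) and (\ref{eq: Lip Q Psi d}) by taking a judicious linear combination, then to apply the triangle inequality and bound the resulting quotient using the relation $d=\lfloor n/\sqrt{2}\rfloor$.

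More precisely, write
\[
A:=\alpha(n-1)+\beta\tfrac{(n-1)(n-2)}{2},\qquad B:=\alpha(d-1)+\beta\tfrac{(d-1)(d-2)}{2},
\]
so that $Lip(Q(\Psi))=2|A|$ and $Lip(Q(\Psi_d))\geq 2|B|$ by (\ref{eq: Lip Q Psi}) and (\ref{eq: Lip Q Psi d}). The coefficients of $\beta$ in $(d-1)(d-2)A$ and $(n-1)(n-2)B$ coincide, and a direct calculation gives
\[
(n-1)(n-2)B-(d-1)(d-2)A=\alpha(d-1)(n-1)\bigl[(n-2)-(d-2)\bigr]=\alpha(d-1)(n-1)(n-d).
\]
The triangle inequality therefore yields
\[
\bigl[(n-1)(n-2)+(d-1)(d-2)\bigr]\max\{|A|,|B|\}\geq |\alpha|(d-1)(n-1)(n-d).
\]

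Next I would simplify the ratio on the right. First, since $d\leq n/\sqrt{2}$, one checks (for $n\geq 2$) that $(d-1)(d-2)\leq\tfrac{1}{2}(n-1)(n-2)$, so the bracket on the left is at most $\tfrac{3}{2}(n-1)(n-2)$. Dividing, one obtains
\[
\max\{|A|,|B|\}\geq |\alpha|\cdot \frac{2(d-1)(n-d)}{3(n-2)}.
\]
The estimates $d\geq n/\sqrt{2}-1$ and $n-d\geq n(1-1/\sqrt{2})=n(\sqrt{2}-1)/\sqrt{2}$ give
\[
(d-1)(n-d)\geq \Bigl(\tfrac{n}{\sqrt{2}}-2\Bigr)\cdot \frac{n(\sqrt{2}-1)}{\sqrt{2}}=\frac{(\sqrt{2}-1)\,n\,(n-2\sqrt{2})}{2}.
\]
Plugging this in, using that $n/(n-2)\geq 1$ whenever the statement is nontrivial (otherwise $n-2\sqrt{2}\leq 0$ and there is nothing to prove), and recalling the factor $2$ relating $|A|,|B|$ to Lipschitz constants, leads to
\[
\max\{Lip(Q(\Psi)),Lip(Q(\Psi_d))\}\geq 2|\alpha|\cdot\frac{(\sqrt{2}-1)(n-2\sqrt{2})}{3}=\frac{2}{3}(\sqrt{2}-1)|\alpha|(n-2\sqrt{2}),
\]
which is (\ref{eq: max Lip Q Psi and Psi d}).

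The conceptual step is the linear combination killing $\beta$; the rest is bookkeeping. The only mildly delicate point is controlling the floor function $d=\lfloor n/\sqrt{2}\rfloor$ so that the numerical constants come out exactly as stated, and in particular verifying $(d-1)(d-2)\leq\tfrac{1}{2}(n-1)(n-2)$ to clear the denominator with an explicit factor $3/2$. No other obstacle is expected.
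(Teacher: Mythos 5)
Your proof is correct, and it takes a genuinely different route from the paper. The paper fixes a threshold parameter $\lambda>0$ and splits into the two cases $|\beta|\geq\lambda\frac{|\alpha|}{n-2}$ and $|\beta|\leq\lambda\frac{|\alpha|}{n-2}$: in the first case the $\beta$-term dominates in (\ref{eq: Lip Q Psi}) and $Lip(Q(\Psi))$ is large; in the second the $\alpha$-term dominates in (\ref{eq: Lip Q Psi d}) and $Lip(Q(\Psi_d))$ is large; the choice $\lambda=\frac{2}{3}(\sqrt{2}+2)$ then balances the two lower bounds. You instead form the combination $(n-1)(n-2)B-(d-1)(d-2)A$, which cancels $\beta$ identically, and conclude by the triangle inequality; the remaining work is the same bookkeeping with $d=\lfloor n/\sqrt{2}\rfloor$ that the paper also needs. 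I checked your algebra: the cancellation identity, the bound $(d-1)(d-2)\leq\frac{1}{2}(n-1)(n-2)$ (valid once $n\geq 2\sqrt{2}$, and the claim is vacuous otherwise since its right-hand side is nonpositive), and the floor estimates all hold, and remarkably both methods land on exactly the same constant $\frac{2}{3}(\sqrt{2}-1)$. Your version has the advantage of avoiding both the case split and the optimization over $\lambda$ --- the ``right'' linear combination is found once and for all --- while the paper's version makes more transparent \emph{which} of the two functions $\Psi$, $\Psi_d$ witnesses the large norm in each regime of $\beta$. One minor point of care that you implicitly use and should state: multiplying the two lower bounds $d-1\geq n/\sqrt{2}-2$ and $n-d\geq n(\sqrt{2}-1)/\sqrt{2}$ requires both right-hand sides to be nonnegative, which again holds exactly when $n\geq 2\sqrt{2}$, i.e.\ in the only nontrivial range.
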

\begin{proof}[Proof of Claim \ref{claim: max Lip Q Psi and Psi d}] \renewcommand\qedsymbol{$\square$} We fix a parameter $\lambda>0$, whose value will be determined later, and we distinguish two cases.
\begin{itemize}
    \item In case $|\beta|\geq\lambda \frac{|\alpha|}{n-2}$, from (\ref{eq: Lip Q Psi}) we have
    
    \begin{equation*}
    \begin{split}
    Lip(Q(\Psi)) &= 2\left| \alpha(n-1) + \beta \frac{(n-1)(n-2)}{2} \right|\\
    &\geq 2|\beta|\frac{(n-1)(n-2)}{2} -2|\alpha|(n-1)\\
    &\geq \lambda|\alpha|(n-1) -2|\alpha|(n-1)= (\lambda-2) |\alpha|(n-1).
    \end{split}
    \end{equation*}
    \item In case $|\beta|\leq\lambda \frac{|\alpha|}{n-2}$, on the other hand, we use (\ref{eq: Lip Q Psi d}) and we obtain
    \begin{equation*}
    \begin{split}
    Lip(Q(\Psi_d)) &\geq 2\left|\alpha(d-1) + \beta \frac{(d-1)(d-2)}{2} \right|\\
    &\geq 2|\alpha|(d-1) - 2|\beta|\frac{(d-1)(d-2)}{2}\\
    &\geq 2|\alpha|(d-1) - \lambda|\alpha|(d-1) \frac{d-2}{n-2}\\
    &\geq 2|\alpha|(d-1) - \lambda|\alpha|(d-1) \frac{1}{\sqrt{2}}\\
    &\geq |\alpha|(d-1) (2-\lambda/\sqrt{2})\\
    &\geq \left(\sqrt{2}-\lambda/2\right)|\alpha| \left(n-2\sqrt{2}\right).
    \end{split}
    \end{equation*}
    Here, we used the inequalities $\frac{d-2}{n-2}\geq 1/\sqrt{2}$ and $d-1\geq \frac{n-2\sqrt{2}}{2}$ that follow readily from our previous choice $d:=\lfloor n/\sqrt{2}\rfloor$.
\end{itemize}

The optimal choice for $\lambda$, namely $\lambda:=\frac{2}{3} \left(\sqrt{2}+2\right)$ yields $\lambda-2 = \sqrt{2} -\lambda/2=\frac{2}{3} \left(\sqrt{2}-1\right)$. Thus, we have:
\begin{itemize}
    \item In case $|\beta|\geq\lambda \frac{|\alpha|}{n-2}$,
    \begin{equation*}
    \begin{split}
    Lip(Q(\Psi)) &\geq \frac{2}{3}\left(\sqrt{2}-1\right) |\alpha|(n-1)\\
    &\geq \frac{2}{3}\left(\sqrt{2}-1\right) |\alpha| \left(n-2\sqrt{2}\right).
    \end{split}
    \end{equation*}
    \item In case $|\beta|\leq\lambda \frac{|\alpha|}{n-2}$,
    \begin{equation*}
    Lip(Q(\Psi_d)) \geq \frac{2}{3}\left(\sqrt{2}-1\right) |\alpha|\left(n-2\sqrt{2}\right).
    \end{equation*}
\end{itemize}
Therefore, the estimate (\ref{eq: max Lip Q Psi and Psi d}) is proved.
\end{proof}

\bigskip
\textbf{Step 4.} \textit{Estimate of $\alpha$.}\medskip

The estimate (\ref{eq: max Lip Q Psi and Psi d}) that we obtained in Claim \ref{claim: max Lip Q Psi and Psi d} in the previous step is still not sufficient, since we have no information on the parameter $\alpha$. It is thus the purpose of this step to handle this and to give a lower bound for $\alpha$. To wit, we prove here the following claim.

\begin{claim}\label{claim: alpha >=} Assume that $2\e K_n\leq1$. Then, the parameter $\alpha$ that appears in (\ref{eq: max Lip Q Psi and Psi d}) satisfies

\begin{equation}\label{eq: alpha >=}
    \alpha\geq \left(\frac{\pi}{72}-\delta\right)\cdot \big(1-2\e K_n\big)\geq0.
\end{equation}
\end{claim}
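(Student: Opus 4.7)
My plan is to extract $\alpha$ by evaluating $Q(\psi_{12})$ at a cleverly chosen point and then to exploit the $\On$-invariance of $Q$ to reduce the problem to an explicit one-variable computation. The starting observation is that formula (\ref{eq: Q psi12}) gives $Q(\psi_{12})(e_1) = \alpha$, and more generally $Q(\psi_{12})(R_\theta e_1) = \alpha$ for every $\theta \in [0, 2\pi]$, where $R_\theta \in \On$ denotes the rotation by $\theta$ in the $(x_1,x_2)$-plane (identity on the remaining coordinates). Using the invariance $Q(\psi_{12}\circ R_\theta) = Q(\psi_{12})\circ R_\theta$ from Lemma \ref{lemma: average Q}(i), this becomes $\alpha = Q(\psi_{12}\circ R_\theta)(e_1)$, and averaging over $\theta\in[0,2\pi]$ together with the commutation of $Q$ with Bochner integration yields
$$\alpha = Q(G)(e_1),\qquad G := \frac{1}{2\pi}\int_0^{2\pi}\psi_{12}\circ R_\theta\,\d\theta.$$

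The second step is to compute $G$ explicitly and compare it with a suitable polynomial. Since $\psi_{12}$ depends only on $(x_1,x_2)$, the angular average produces a planar-radial function that a direct change of variables identifies as $G(x) = \bar{\tau}\cdot\rho(\sqrt{x_1^2+x_2^2})$, with $\bar{\tau} := \frac{2}{\pi}\int_0^{\pi/2}\tau(s)\,\d s$. The bound $\tau\geq \tau_0-\delta$ and the fact that $\tau_0$ is a triangle of area $\pi^2/144$ on $[0,\pi/2]$ together give $\bar{\tau}\geq \pi/72 - \delta$. The natural polynomial approximation to $G$ is $P := \bar{\tau}\cdot N_2$ with $N_2(x):= x_1^2+x_2^2$. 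A one-variable calculation shows that $\rho(r)-r^2$ has derivative bounded in absolute value by $4\e$ throughout $[0,1]$; since $r(x) = \sqrt{x_1^2+x_2^2}$ is $1$-Lipschitz on $B_E$, this yields $\|G-P\|_\Lip \leq 4\e\bar{\tau}$.

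To conclude, I would apply $Q$ to the decomposition $G = P + (G-P)$. Since $P\in\P({}^2E)$ and $Q$ is a projection, $Q(P) = P$, and therefore $\alpha = Q(G)(e_1) = P(e_1) + Q(G-P)(e_1) = \bar{\tau} + Q(G-P)(e_1)$. To control the error term, observe that $Q(G-P)$ is a $2$-homogeneous polynomial, so by Fact \ref{fact: poly are Lip} one has $\|Q(G-P)\|_\P\leq \tfrac{1}{2}\|Q(G-P)\|_\Lip\leq \tfrac{1}{2}K_n\|G-P\|_\Lip\leq 2\e K_n \bar{\tau}$; evaluating at $e_1\in B_E$ gives $|Q(G-P)(e_1)|\leq 2\e K_n\bar{\tau}$. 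Combining these produces $\alpha\geq \bar{\tau}(1 - 2\e K_n)\geq (\pi/72 - \delta)(1 - 2\e K_n)$, which is non-negative precisely under the standing hypothesis $2\e K_n\leq 1$. The only genuinely non-mechanical step is the invariance-averaging reduction of the first paragraph, which converts an $n$-dimensional evaluation of the projection into a planar radial computation whose answer can be read off from an explicit integral of $\tau$; everything afterwards is a standard polynomial approximation followed by the projection bound.
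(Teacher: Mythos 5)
Your argument is correct and follows essentially the same route as the paper: you average $\psi_{12}$ over the planar rotations (your $G$ is exactly the paper's $\tilde{\psi}$, and your $\bar{\tau}$ its $\eta$), compare it with $\bar{\tau}N_2$ via the same $4\e\bar{\tau}$ Lipschitz estimate, and use the projection norm to bound $|\alpha-\bar{\tau}|$ by $2\e K_n\bar{\tau}$. The only (inessential) difference is that you extract $\alpha$ by evaluating at $e_1$ and invoking $2\|\cdot\|_\P\leq\|\cdot\|_\Lip$ from Fact \ref{fact: poly are Lip}, whereas the paper restricts $Q(\eta N_2-\tilde{\psi})$ to $\R^2$ and reads off the Lipschitz constant of $(\eta-\alpha)N_2$ directly; both yield the identical final inequality.
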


The proof is based on the fact, mentioned at the beginning of the present section, that the average of $\psi$ under rotations of the plane is close to a multiple of the square of the norm of $\R^2$. Therefore, its projection onto the set of $2$-homogeneous polynomials shares the same property.

\begin{proof}[Proof of Claim \ref{claim: alpha >=}] \renewcommand\qedsymbol{$\square$} We identify the group $\SO$ with the subgroup of $\On$ of rotations of ${\rm span}\{e_1,e_2\}$. Accordingly, for $\omega\in\SO$, we also write $\omega$ to denote the rotation $\omega\oplus\mathbbm{1}_{n-2}\in\On$ defined by $\omega\oplus \mathbbm{1}_{n-2} (x_1,\dots,x_n)= (\omega(x_1,x_2),x_3,\dots,x_n)$. We also denote by $\mu_2$ the Haar measure of $\SO$.

Let $\tilde{\psi}$ be the average of $\psi_{12}$ under the group $\SO$ (see \S\ \ref{sec: average}), \emph{i.e.}, let

$$\tilde{\psi}:=\int_{\omega\in \SO} \psi_{12}\circ\omega\, \d\mu_2(\omega).$$
Then for every $x\in B_E$, we have

\begin{equation*}
\begin{split}
\tilde{\psi}(x) &= \int_{\omega\in \SO} \psi_{12} \left(\omega(x_1,x_2), x_3,\dots,x_n \right) \d \mu_2(\omega) \\
&= \int_{\omega\in \SO} \psi \left(\omega(x_1,x_2) \right) \d\mu_2(\omega) \\
&= 4\cdot\int_{\theta\in [0,\pi/2]}\ro\left(\sqrt{x_1^2 + x_2^2}\right) \cdot \tau(\theta)\, \frac{\d\theta}{2\pi} \\
&=: \ro\left(\sqrt{x_1^2 + x_2^2}\right) \cdot \eta,
\end{split}
\end{equation*}
where we defined $\eta:= 4\cdot\int_{[0,\pi/2]}  \tau(\theta)\, \frac{\d\theta}{2\pi}$.

Letting, as before, $N_2\in\P({}^2E)$ be the polynomial $N_2(x)=x_1^2 +x_2^2$ and recalling that $\ro(r):=(r-2\e)^2 \cdot\mathbbm{1}_{[2\e,1]}(r)$, the above yields

$$\left(\eta N_2 -\tilde{\psi}\right)(x)= \eta\cdot \begin{cases} 
x_1^2 + x_2^2 & \text{if } x_1^2 + x_2^2\leq 4\e^2\\
4\e\sqrt{x_1^2 + x_2^2} -4\e^2 & \text{elsewhere}.
\end{cases}$$
Consequently,
\begin{equation}\label{eq: Lip psi tilde}
    Lip\left(\eta N_2 -\tilde{\psi}\right) \leq 4\e \eta.
\end{equation}

Moreover, since $\tau_0-\delta\leq \tau\leq \tau_0$ and $4\cdot\int_{[0,\pi/2]}  \tau_0(\theta)\, \frac{\d\theta}{2\pi}= \frac{\pi}{72}$, we have the estimate

\begin{equation}\label{eq: estimate on eta}
    0< \frac{\pi}{72}-\delta\leq \eta\leq\frac{\pi}{72}.
\end{equation}

On the other hand, using the facts that Bochner integrals commute with linear operators, that $Q$ is invariant under $\On$ (hence, under its subgroup $\SO$) and that $Q(\psi_{12})$ is invariant under $\SO$ by (\ref{eq: Q psi12}), we derive

\begin{equation*}
\begin{split}
Q(\tilde{\psi}) &= \int_{\omega\in \SO} Q\left(\psi_{12}\circ\omega\right)\, \d\mu_2(\omega)\\
&= \int_{\omega\in \SO} Q\left(\psi_{12}\right) \circ\omega\, \d\mu_2(\omega)\\
&= \int_{\omega\in \SO} Q\left(\psi_{12}\right)\, \d\mu_2(\omega)= Q\left(\psi_{12}\right).
\end{split}
\end{equation*}

Hence, using again $Q\psi_{12}=\alpha N_2 + \beta(N-N_2)$ from (\ref{eq: Q psi12}), we obtain

\begin{equation*}
    Q\left(\eta N_2 -\tilde{\psi}\right) = \eta N_2 -Q(\psi_{12})= \left(\eta -\alpha\right)N_2 + \beta(N-N_2),
\end{equation*}
whence

\begin{equation}\label{eq: Lip Q psi tilde}
\begin{split}
    Lip\left(Q\left(\eta N_2 -\tilde{\psi}\right)\right) &\geq Lip\left(Q \left(\eta N_2 -\tilde{\psi} \right)\cut_{\R^2} \right)\\
    &= Lip\left(\left(\eta -\alpha\right)N_2\right)= 2 \left|\alpha-\eta\right|.
\end{split}
\end{equation}

Finally, combining (\ref{eq: Lip psi tilde}) with (\ref{eq: Lip Q psi tilde}) yields

\begin{equation*}
\begin{split}
2 \left|\alpha-\eta\right| &\leq Lip\left(Q\left(\eta N_2 -\tilde{\psi}\right)\right)\\
&\leq K_n\cdot Lip\left(\eta N_2 -\tilde{\psi}\right)\\
&\leq 4\e \eta \cdot K_n.
\end{split}
\end{equation*}
Therefore, $\alpha\geq\eta \left(1-2\e K_n \right)\geq0$
whence, using (\ref{eq: estimate on eta}), the conclusion follows.
\end{proof}

\bigskip
\textbf{Step 5.} \textit{Plugging estimates together.}\medskip

In this step, we shall combine the estimates obtained in Fact \ref{fact: Lip Psi} and Claim \ref{claim: max Lip Q Psi and Psi d} with the lower bound on $\alpha$ from Claim \ref{claim: alpha >=} and we shall conclude the proof.

From Fact \ref{fact: Lip Psi} and Claim \ref{claim: max Lip Q Psi and Psi d} we deduce

\begin{equation}\label{eq: last bound}
\begin{split}
    \frac{2}{3}\left(\sqrt{2}-1\right)|\alpha|\left(n- 2\sqrt{2}\right) &\leq \max\left\{\|Q(\Psi)\|_\Lip, \|Q(\Psi_d)\|_\Lip \right\}\\
    &\leq K_n \cdot \max\left\{\|\Psi\|_\Lip, \|\Psi_d\|_\Lip \right\} \leq K_n\cdot \frac{1}{\e^4}.    
\end{split}
\end{equation}

We now set (the reason behind such a choice will be clear in a few lines)

$$c:=\frac{2}{3}\left(\sqrt{2}-1\right) \frac{\pi}{72} \qquad\text{and}\qquad \e:=\frac{2^{1/5}}{c^{1/5} \left(n-2\sqrt{2}\right)^{1/5}}$$
and we distinguish two cases, depending on whether $1-2\e K_n\geq0$ or not.

\begin{itemize}
    \item If $2\e K_n\leq1$, we see from Claim \ref{claim: alpha >=} that $\alpha\geq0$. Therefore, we can plug the estimate (\ref{eq: alpha >=}) for $\alpha$ in (\ref{eq: last bound}) and we obtain
    
    \begin{equation}\label{eq: last eq with alpha}
    \frac{1}{\e^4}\cdot K_n \geq \frac{2}{3}\left( \sqrt{2}-1\right)\cdot \left(\frac{\pi}{72}-\delta \right)\cdot(1-2\e K_n) \left(n-2\sqrt{2}\right).
    \end{equation}
    We can now let $\delta\to0$ and obtain
    
    \begin{equation*}
    \begin{split}
    \frac{1}{\e^4}\cdot K_n &\geq \frac{2}{3}\left( \sqrt{2}-1\right)\cdot \frac{\pi}{72}(1-2\e K_n) \left(n-2\sqrt{2}\right)\\
    &= c\cdot (1-2\e K_n) \left(n-2\sqrt{2}\right),
    \end{split}
    \end{equation*}
    which we rewrite as follows:
    
    \begin{equation*}
    c\cdot\left(n-2\sqrt{2}\right) \leq \left[\frac{1}{\e^4} + 2\e c \left(n-2\sqrt{2}\right) \right]K_n.    
    \end{equation*}
    The above choice of $\e$ minimises the value of the square parenthesis above and gives the value
    
    \begin{equation*}
    \left[\frac{1}{\e^4} + 2\e c \left(n-2\sqrt{2}\right) \right] = c^{4/5} \frac{5}{2^{4/5}} \left(n-2\sqrt{2}\right)^{4/5}.
    \end{equation*}
    
    Therefore, we have
    
    \begin{equation*}
    K_n \geq \frac{2^{4/5}}{5} c^{1/5} \left(n-2\sqrt{2} \right)^{1/5} = C \left(n-2\sqrt{2} \right)^{1/5}.
    \end{equation*}
    Here,
    
    \begin{equation*}
    C:= \frac{2^{4/5}}{5} c^{1/5} = \frac{2}{5} \left(\frac{\sqrt{2}-1}{3}\cdot\frac{\pi}{72} \right)^{1/5},
    \end{equation*}
    hence we obtained the inequality claimed in Theorem \ref{mth: finite-dim}.
    \item In the case $2\e K_n\geq1$, then
    
    \begin{equation*}
        K_n \geq \frac{1}{2\e} = \frac{1}{2^{6/5}} c^{1/5} \left(n-2\sqrt{2} \right)^{1/5} \geq C \left(n-2\sqrt{2} \right)^{1/5}, 
    \end{equation*}
    since $\frac{1}{2^{6/5}}\geq\frac{2^{4/5}}{5}$. Consequently, also in this case, the desired estimate is valid.
\end{itemize}
\end{proof}

%-----------------------------------------------------------%
%                                                           %
% 						CONSEQUENCES 						%
%                                                           %
%-----------------------------------------------------------%
\section{Infinite-dimensional results}\label{Sec: inf-dim}
The present section is dedicated to our infinite-dimensional results. We shall start with the proof of our main result, Theorem \ref{mth: inf-dim}. We then briefly mention the situation for separable $\mathscr{L}_1$-spaces. Finally we conclude the section discussing how to extend our results to higher order polynomials.

\begin{proof}[Proof of Theorem \ref{mth: inf-dim}] As we already observed in the Introduction, (i) follows from \cite{FTJ} and the first part of the theorem, while (ii) is a particular case of (i). We now prove the first clause.

Let $X$ be a Banach space that contains uniformly complemented $(\ell_2^n)_{n=1}^\infty$; towards a contradiction, assume that there exists a projection $Q$ from $\Lip(B_X)$ onto $\P({}^2X)$. By definition, we may pick a sequence $(F_n)_{n=1}^\infty$ of $C$-complemented subspaces of $X$ such that $F_n$ is $C$-isomorphic to $\ell_2^n$ ($n\in\N$). Let $P_n$ be projections from $X$ onto $F_n$ with $\|P_n\|\leq C$ and let $T_n \colon \ell_2^n \to F_n$ be isomorphisms with $\|T_n\|\leq1$ and $\|T_n^{-1}\|\leq C$, for each $n\in\N$. 

We define a projection $Q_n$ from $\Lip(C^2\cdot B_{\ell_2^n})$ onto $\P({}^2\ell_2^n)$ by the following formula. 

$$Q_n(f):=\left(Q(f\circ T_n^{-1}\circ (P_n\cut_{B_X}) \right)\!\cut_{F_n} \circ T_n\qquad (f\in\Lip(C^2\cdot B_{\ell_2^n})).$$

$$\xymatrix{ \Lip(C^2\cdot B_{\ell_2^n}) \ar[d]^{Q_n} \ar[rr]^{(T_n^{-1}\circ (P_n\cut_{B_X}) )^\sharp} && \Lip(B_X) \ar[d]^Q \\
\P({}^2\ell_2^n) & \P({}^2F_n) \ar[l]_{T_n^\sharp} &\P({}^2 X) \ar[l]_{\cdot\cut_{F_n}}}$$
Here, $T_n^\sharp$ is defined by $T_n^\sharp g:=g\circ T_n$ (and similarly for $(T_n^{-1}\circ (P_n\cut_{B_X}))^\sharp$). Notice that in this case we consider $\P({}^2\ell_2^n)$ as a subspace of $\Lip(C^2\cdot B_{\ell_2^n})$ via the embedding $P\mapsto P\cut_{C^2\cdot B_{\ell_2^n}}$.

Since $\|T_n^{-1}\circ P_n\|\leq C^2$, $f\circ T_n^{-1}\circ (P_n\cut_{B_X})$ is indeed a Lipschitz function on $B_X$, when $f\in \Lip(C^2\cdot B_{\ell_2^n})$. Thus, $Q(f\circ T_n^{-1}\circ (P_n\cut_{B_X}))$ is a polynomial on $X$ whose Lipschitz constant on $B_X$ is at most $C^2\|Q\|\cdot Lip(f)$. Hence, its Lipschitz constant on $C^2\cdot B_X$ is bounded by $C^4\|Q\|\cdot Lip(f)$. Consequently, we obtain that $\|Q_n\|\leq C^4\|Q\|$. Moreover, it is easy to realise that $Q_n$ is a projection from $\Lip(C^2\cdot B_{\ell_2^n})$ onto $\P({}^2\ell_2^n)$. 

By scaling, we also obtain projections from $\Lip(B_{\ell_2^n})$ onto $\P({}^2\ell_2^n)$ with norms at most $C^4\|Q\|$, for every $n\in\N$. However, for large $n$, this contradicts Theorem \ref{mth: finite-dim} and concludes the proof.
\end{proof}

We already mentioned in the Introduction that there is no hope to extend the conclusion of Theorem \ref{mth: inf-dim} to every infinite-dimensional Banach space. The first result in this direction was that $\P({}^k\ell_1)$ is isomorphic to $\ell_\infty$, \cite{AS}. Its proof goes as follows: one first shows the rather straightforward fact that $\L({}^k \ell_1)$ is isometric to $\ell_\infty$. Then, by the Polarisation formula, $\L^s({}^k \ell_1)$ is complemented in $\L({}^k \ell_1)$ (recall that $\L^s({}^k X)$ is isomorphic to $\P({}^k X)$ for every $X$). The conclusion follows from Lindenstrauss' result that complemented subspaces of $\ell_\infty$ are isomorphic to it, \cite{L}. Alternatively, one could directly use that $\L^s({}^k X)$ is isomorphic to $\L({}^k X)$ when $X^2$ is isomorphic to $X$, \cite{DD}.

\begin{proposition}[\cite{AF}]\label{prop: L1} If $X$ is a separable $\mathscr{L}_1$-space, $\P({}^k X)$ is isomorphic to $\ell_\infty$, for every $k\geq 1$. Hence, $\P({}^k X)$ is complemented in $\Lip(B_X)$.
\end{proposition}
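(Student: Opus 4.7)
The plan is to extend the argument sketched for $\ell_1$ in the paragraph preceding the proposition to any separable $\mathscr{L}_1$-space $X$. Concretely, I would first establish the isomorphism $\L({}^k X)\cong \ell_\infty$; then the same polarisation step and the same appeal to Lindenstrauss' theorem as in the $\ell_1$ case will deliver $\P({}^k X)\cong \ell_\infty$, and the complementation in $\Lip(B_X)$ will follow automatically from injectivity of $\ell_\infty$.

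The heart of the argument is the identification $\L({}^k X)\cong \ell_\infty$. My approach would be via projective tensor products: the defining property of $\hat\otimes_\pi$ yields the isometry $\L({}^k X)\cong (\hat\otimes_\pi^k X)^*$, so the task reduces to describing the dual of a $k$-fold projective tensor power of $X$. I would then appeal to two classical inputs from the structure theory of $\mathscr{L}_p$-spaces. First, the projective tensor product of a finite family of separable $\mathscr{L}_1$-spaces is itself a separable $\mathscr{L}_1$-space (essentially due to Grothendieck for $L_1$, and found in the tensor-norms literature in the general $\mathscr{L}_1$ setting). Second, the dual of every infinite-dimensional separable $\mathscr{L}_1$-space is isomorphic to $\ell_\infty$; this follows from the Lindenstrauss--Pe\l{}czy\'nski theory of $\mathscr{L}_\infty$-spaces together with the uniqueness (up to isomorphism) of injective separable-dual $\mathscr{L}_\infty$-spaces. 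Combining the two facts gives $\L({}^k X)\cong \ell_\infty$.

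With this in hand, the rest of the proof is essentially the sketch recalled in the excerpt for $X=\ell_1$: symmetrising a $k$-linear form by averaging over the $k!$ permutations of its arguments defines a bounded projection from $\L({}^k X)$ onto $\L^s({}^k X)$, hence $\P({}^k X)\cong \L^s({}^k X)$ is (isomorphic to) a complemented subspace of $\ell_\infty$ and therefore isomorphic to $\ell_\infty$ itself by Lindenstrauss \cite{L}. The second statement of the proposition is then automatic: by Fact~\ref{fact: poly are Lip} restriction provides an isomorphic embedding of $\P({}^k X)$ into $\Lip(B_X)$, and because $\ell_\infty$ is an injective Banach space every isomorphic copy of $\ell_\infty$ sitting inside any Banach space is automatically complemented.

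The main obstacle is not the high-level strategy, which mirrors the $\ell_1$-case line by line, but rather verifying the two $\mathscr{L}_1$-structure inputs in the generality of arbitrary separable $\mathscr{L}_1$-spaces. An alternative route that would avoid the projective tensor-product machinery would combine the Dimant--Dineen isomorphism $\L^s({}^k X)\cong \L({}^k X)$ (valid when $X\cong X^2$, a condition satisfied by the main examples such as $L_1$) with a direct description of $\L({}^k X)$ derived from the local $\ell_1^n$-structure of $X$.
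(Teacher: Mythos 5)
Your argument is correct, and the overall skeleton (reduce to $\L({}^k X)\cong\ell_\infty$, then polarisation, Lindenstrauss' theorem on complemented subspaces of $\ell_\infty$, and injectivity of $\ell_\infty$ for the complementation in $\Lip(B_X)$) coincides with the paper's; but the way you establish the key isomorphism $\L({}^k X)\cong\ell_\infty$ is genuinely different. The paper works directly with the local structure of $X$: by \cite[Proposition II.5.9]{LT} there are uniformly bounded maps $\varphi_j\colon X\to\ell_1^j$ and $\psi_j\colon\ell_1^j\to X$ with $\varphi_j\circ\psi_j=id$ and $\bigcup_j\psi_j(\ell_1^j)$ dense in $X$, and one defines $\Psi(M):=(M\circ\psi_j)_j$ into $\bigl(\sum_j\L({}^k\ell_1^j)\bigr)_\infty\cong\ell_\infty$ together with the left inverse $\Phi\bigl((M_j)_j\bigr):=\lim_{\mathcal{U}}M_j\circ\varphi_j$; this exhibits $\L({}^k X)$ as a complemented subspace of $\ell_\infty$, and \cite{L} does the rest. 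You instead identify $\L({}^k X)$ with $\bigl(\widehat{\otimes}_\pi^{k}X\bigr)^*$ and invoke two structural theorems: stability of the class of separable $\mathscr{L}_1$-spaces under projective tensor products, and the fact that the dual of every infinite-dimensional separable $\mathscr{L}_1$-space is isomorphic to $\ell_\infty$. Both are true; note that the second is most cleanly derived by the very same endgame as in the paper --- such a dual is injective (Lindenstrauss--Rosenthal), embeds into $\ell_\infty$ because its predual is separable, is therefore complemented there, and is isomorphic to $\ell_\infty$ by \cite{L} --- so you may as well quote \cite{L} directly rather than a uniqueness statement for injective separable duals. The trade-off is that your route is more modular and conceptual but leans on heavier citations, and the first input (that $\widehat{\otimes}_\pi$ preserves the $\mathscr{L}_1$-class) is itself proved by essentially the same local factorisation through the $\varphi_j,\psi_j$ that the paper (following \cite{AF}) carries out by hand; the paper's construction is self-contained modulo \cite{LT} and never needs the tensor-product formalism. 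Your final two steps --- the averaging over permutations to project $\L({}^k X)$ onto $\L^s({}^k X)\cong\P({}^k X)$, and the observation that an isomorphic copy of the injective space $\ell_\infty$ inside $\Lip(B_X)$ is automatically complemented --- are exactly the paper's.
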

Combining this with the comments below on general polynomials, it also follows that $\P^k(X)$ is isomorphic to $\ell_\infty$. We refer to \cite{LT} for the definition of the $\mathscr{L}_1$-spaces.

\begin{proof} As before, it suffices to embed $\L({}^k X)$ as a complemented subspace of $\ell_\infty$ and appeal to \cite{L}. Since $X$ is a separable $\mathscr{L}_1$-space, there are two uniformly bounded sequences $(\p_j)_{j=1}^ \infty$ and $(\psi_j)_{j=1}^\infty$ of linear maps $\p_j\colon X\to \ell_1^j$, $\psi_j \colon \ell_1^j \to X$ such that $\p_j\circ \psi_j=id$ and $(\psi_j(\ell_1^j)) _{j=1}^\infty$ is an increasing sequence whose union is dense in $X$ (\cite[Proposition II.5.9]{LT}). Define a bounded linear map $\Psi\colon \L({}^k X) \to \left(\sum_{j=1}^\infty \L({^k \ell_1^j})\right)_\infty$ by $\Psi (M):=(M\circ \psi_j)_{j=1}^\infty$; notice that $\left(\sum_{j=1}^\infty \L({^k \ell_1^j})\right)_\infty$ is isometric to $\ell_\infty$.

We also define $\Phi\colon \left(\sum_{j=1}^\infty \L({^k \ell_1^j})\right)_\infty \to \L({}^k X)$ by $(M_j)_{j=1}^ \infty\mapsto \lim_{\mathcal{U}} M_j\circ\p_j$ where $\mathcal{U}$ is a free ultrafilter on $\N$ and the limit is in the pointwise topology. Plainly, $\Phi$ is a bounded linear map. Moreover, it is easy to see that $\Phi \circ \Psi$ is the identity of $\L({}^k X)$ \cite[Lemma 2.3]{AF}. Hence, $\L({}^k X)$ embeds as a complemented subspace of $\ell_\infty$, as desired.
\end{proof}

\begin{remark} Let $X$ be a separable $\mathscr{L}_p$-space, $1<p<\infty$. The same argument as above shows that $\L({}^k X)$ embeds as a complemented subspace of $\left(\sum_{j=1}^\infty \L({^k \ell_p^j})\right)_p$, which is complemented in $\L({^k \ell_p})$. Then, one shows that $\L({^k \ell_p})$ embeds as a complemented subspace of $\L({^k X})$ \cite[Lemma 2.4]{AF} and invokes Pe\l czy\'nski's decomposition method to conclude that $\L({^k X})$ is isomorphic to $\L({^k \ell_p})$, \cite[Theorem 2.1]{AF}.

Let us also point out the similarity between this argument and some techniques from \cite{CCD}. This is also true for the statement of some their results, \emph{e.g.}, \cite[Theorem 3.3]{CCD}.
\end{remark}

In conclusion to our article, we shall mention how to extend our results to $k$-homogeneous polynomials ($k\geq2$) and general polynomials. For the definition of $k$-homogeneous polynomials we refer to Section \ref{sec: poly}. A \emph{polynomial of degree at most $k$} on $X$ is a function $P\colon X\to\R$ of the form $P=\sum_{j=0}^k P_j$, where $P_j\in\P({}^j X)$. The collection of all polynomials of degree at most $k$ is denoted by $\P^k(X)$ and it is a Banach space under the same norm $\|P\|_\P:=\sup_{x\in B_X}|P(x)|$. We denote by $\P^k_0(X)$ the closed subspace comprising polynomials that vanish at $0$, namely those where $P_0=0$ (of course, such restriction is necessary in order to embed into $\Lip(B_X)$).

It is an important fact that $\P^k(X)=\P({}^0X)\oplus \P({}^1X)\oplus\dots\oplus \P({}^kX)$ via the natural decomposition $P=\sum_{j=0}^k P_j\mapsto (P_j)_{j=0}^k$. Moreover, the norm of the projection $P\mapsto P_j$ does not depend on $X$. (This can be shown via Vandermonde matrices as in \cite[Fact 1.1.42]{HJ}, or via the Polarisation formula, \cite[Lemma 1.1.47]{HJ}.) In particular, the map $P\mapsto P\cut_{B_X}$ also defines an isomorphic embedding of $\P^k_0(X)$ into $\Lip(B_X)$.\smallskip

Since $\P({}^2X)$ is complemented in $\P^k_0(X)$, a formal consequence of Theorem \ref{mth: inf-dim} is that $\P^k_0(X)$ is not complemented in $\Lip(B_X)$, whenever $X$ is a Banach space as in Theorem \ref{mth: inf-dim}. For $\P({}^k X)$ the argument is more complicated; the idea is to embed $\P({}^2 X)$ as a complemented subspace of $\P({}^k X)$ \cite[Proposition 5.3]{AS} and mimic the proof of Theorem \ref{mth: finite-dim}. However, for us it will be more convenient to embed $\P({}^2 X)$ as a complemented subspace of $\P({}^k X\oplus\R)$. This depends on the canonical isomorphism $H$ between $\P^k(X)$ and $\P({}^k X\oplus\R)$, given by homogenisation.

More precisely, let $\zeta\colon X\oplus\R\to\R$ be the functional $\zeta(x,t):=t$. The action of $H$ on $P=\sum_{j=0}^kP_j\in\P^k(X)$ is given by $P\mapsto {}^h P:= \sum_{j=0}^k \zeta^{k-j}P_j$. Conversely, every $P\in \P({}^k X\oplus\R)$ has the form $P= \sum_{j=0}^k \zeta^{k-j}P_j$, with $P_j\in \P({}^j X)$, and $H^{-1}(P)= \sum_{j=0}^k P_j$ (roughly speaking, $H^{-1}(P)(x)=P(x,1)$). Moreover, the isomorphism constant between $\P^k(X)$ and $\P({}^k X\oplus\R)$ depends on $k$, but not on $X$.

\begin{theorem}\label{thm: higher order} Let $X$ be a Banach space that contains uniformly complemented $(\ell_2^n)_{n=1}^\infty$. Then, for every $k\geq2$, $\P({}^k X)$ and $\P^k_0(X)$ are not complemented in $\Lip(B_X)$.
\end{theorem}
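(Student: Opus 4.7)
The plan is to reduce both assertions to Theorem \ref{mth: inf-dim} (respectively Theorem \ref{mth: finite-dim}) via the structural decompositions of polynomial spaces recalled just above the statement. The case of $\P^k_0(X)$ will be essentially free: since $\P^k_0(X) = \bigoplus_{j=1}^k \P({}^j X)$ and the projection $\pi_2$ onto the $2$-homogeneous summand has norm bounded by a constant depending only on $k$ (the Vandermonde/Polarisation argument), any bounded projection $R \colon \Lip(B_X) \to \P^k_0(X)$ would yield a projection $\pi_2 \circ R \colon \Lip(B_X) \to \P({}^2 X)$, directly contradicting Theorem \ref{mth: inf-dim}.

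For the harder case of $\P({}^k X)$ with $k \geq 3$, I argue by contradiction: suppose $Q \colon \Lip(B_X) \to \P({}^k X)$ is a bounded projection. The argument in the proof of Theorem \ref{mth: inf-dim} is purely linear and works with any degree of homogeneity, so using that $\ell_2^{n+1}$ is uniformly $C$-complemented in $X$ I first obtain projections $Q_n \colon \Lip(B_{\ell_2^{n+1}}) \to \P({}^k \ell_2^{n+1})$ whose norms are bounded independently of $n$. The goal is then to produce, from these $Q_n$, projections $\Phi_n \colon \Lip(B_{\ell_2^n}) \to \P({}^2 \ell_2^n)$ of uniformly bounded norm, contradicting Theorem \ref{mth: finite-dim} for large $n$. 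The passage from $k$-homogeneous polynomials on $\ell_2^{n+1}$ to $2$-homogeneous ones on $\ell_2^n$ is the heart of the matter and the step requiring most care; it will be implemented via the homogenisation isomorphism $H \colon \P^k(\ell_2^n) \to \P({}^k (\ell_2^n \oplus \R))$ (whose constants depend only on $k$) together with a lifting operator $T \colon \Lip(B_{\ell_2^n}) \to \Lip(B_{\ell_2^{n+1}})$ defined, under the identification $\ell_2^{n+1} \cong \ell_2^n \oplus \R$, by $(Tf)(x, t) := t^{k-2} f(x)$.

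A direct gradient estimate, using $|t| \leq 1$ and $|f(x)| \leq Lip(f)$ (valid since $f(0) = 0$ and $\|x\|_2 \leq 1$), will bound $\|T\|$ by a constant depending only on $k$. The key observation is that for $P \in \P({}^2 \ell_2^n)$, viewed in $\P^k(\ell_2^n)$ as the element whose only nonzero component is the $2$-homogeneous one, one has exactly $T(P) = H(P)$: both equal the function $(x,t) \mapsto t^{k-2} P(x)$. Consequently the composition
$$\Phi_n := \pi_2 \circ H^{-1} \circ Q_n \circ T$$
will be a projection onto $\P({}^2 \ell_2^n)$; indeed, for such a $P$, $T(P) = H(P) \in \P({}^k \ell_2^{n+1})$ is fixed by $Q_n$, then $H^{-1}$ returns $P$, and $\pi_2$ extracts $P$ itself. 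Since all four norms $\|\pi_2\|$, $\|H^{-1}\|$, $\|Q_n\|$, and $\|T\|$ are bounded independently of $n$, so is $\|\Phi_n\|$, and Theorem \ref{mth: finite-dim} delivers the desired contradiction.
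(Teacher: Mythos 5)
Your proposal is correct, and for the $\P({}^k X)$ part it takes a genuinely different route from the paper. The treatment of $\P^k_0(X)$ via $\pi_2$ coincides with the paper's. For $\P({}^k X)$, the paper also reduces to uniformly complemented $\P({}^k F_n)$ with $F_n=\ell_2^n\oplus_\infty\R$ and uses the homogenisation $H$ to see that $H(\P({}^2\ell_2^n))$ would be uniformly complemented in $\Lip(B_{F_n})$; but it does \emph{not} then invoke Theorem \ref{mth: finite-dim} as a black box. Instead it re-runs the entire quantitative proof with the test functions $\psi_{ij}\cdot\zeta^{k-2}$, $\Psi\cdot\zeta^{k-2}$, $\Psi_d\cdot\zeta^{k-2}$, checking that the Lipschitz bound only worsens by a factor $k/2$ while the lower bound on $\alpha$ improves by a factor $k-1$, and thereby obtains the same explicit lower bound on the projection norm. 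Your shortcut is to package the multiplication by $\zeta^{k-2}$ as an operator $T\colon\Lip(B_{\ell_2^n})\to\Lip(B_{\ell_2^{n+1}})$, $(Tf)(x,t)=t^{k-2}f(x)$, observe that $\|T\|\leq\sqrt{1+(k-2)^2}$ (for general $f\in\Lip$ you should use the direct quotient estimate $|t^{k-2}f(x)-s^{k-2}f(y)|\leq Lip(f)\left(\|x-y\|+(k-2)|t-s|\right)$ rather than a gradient computation, since $Tf$ need not be differentiable, but this is immediate), and that $T$ intertwines the two canonical embeddings of $\P({}^2\ell_2^n)$, so that $\pi_2\circ H^{-1}\circ Q_n\circ T$ is a uniformly bounded projection onto $\P({}^2\ell_2^n)\subseteq\Lip(B_{\ell_2^n})$, contradicting Theorem \ref{mth: finite-dim} directly. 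I checked the fixed-point computation $Q_n(TP)=H(P)$, $\pi_2H^{-1}H(P)=P$, and the uniformity in $n$ of $\|\pi_2\|$, $\|H^{-1}\|$, $\|T\|$; all are sound. What each approach buys: yours is shorter and fully modular, using Theorem \ref{mth: finite-dim} as a black box; the paper's yields the same explicit quantitative lower bound as Theorem \ref{mth: finite-dim} for projections onto $H(\P({}^2\ell_2^n))$ and avoids introducing the auxiliary operator $T$, at the cost of re-verifying every step of the finite-dimensional argument. Conceptually both hinge on the same multiplication by $\zeta^{k-2}$; the paper applies it only to the specific test functions, you apply it to all of $\Lip(B_{\ell_2^n})$.
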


\begin{proof} We only need to show the assertion concerning $\P({}^k X)$.

Assume that $X$ is a Banach space that contains uniformly complemented $(\ell_2^n)_{n=1}^\infty$ and, towards a contradiction, assume that $\P({}^k X)$ is complemented in $\Lip(B_X)$. The same diagram chasing argument as in the proof of Theorem \ref{mth: inf-dim} shows that there are projections from $\Lip(B_{\ell_2^n})$ onto $\P({}^k\ell_2^n)$ whose norms are bounded uniformly in $n\in\N$. We let $F_n:=\ell_2^n\oplus_\infty \R$. Since $F_n$ is $\sqrt{2}$-isomorphic to $\ell_2^{n+1}$, it also follows that $\P({}^k F_n)$ is complemented in $\Lip(B_{F_n})$, uniformly in $n$.

Next, we exploit the isomorphism $H$. $\P({}^2\ell_2^n)$ is complemented in $\P^k(\ell_2^n)$, uniformly in $n$, and the latter space is isomorphic to $\P({}^k F_n)$, uniformly in $n$ as well. Therefore, the image of $\P({}^2\ell_2^n)$ under $H$ is complemented in $\Lip(B_{F_n})$, uniformly in $n$. However, we shall show that this is not the case.

To simplify the notation, we let $E:=\ell_2^n$ and $F:=F_n=E\oplus_\infty \R$. Moreover, let

$$\P:=H\left(\P({}^2 E)\right)=\left\{P\in\P({}^k F) \colon P=\zeta^{k-2}\cdot P_2, \text{ for some } P_2\in \P({}^2 E) \right\}.$$
Recall that $\zeta$ was defined on $F$ by $\zeta(x,t):=t$ ($x\in E$, $t\in \R$). Heuristically, $\P$ consists of polynomials of the form $t^{k-2}P_2(x)$, for some $2$-homogeneous polynomial $P_2$ on $E$.\smallskip

Our goal will be to show that if $Q$ is any projection from $\Lip(B_F)$ onto $\P$, then the norm of $Q$ diverges with $n$, which leads to the desired contradiction. As in Theorem \ref{mth: finite-dim}, we actually consider the restriction of $Q$ to $\C(B_F)$ and, by averaging, we assume that $Q$ is invariant under $\On$, the group of rotations of the Euclidean space $E$. We then follow the proof of Theorem \ref{mth: finite-dim}, but multiplying the Lipschitz functions there by $\zeta^{k-2}$.

Consider the functions $\p_{ij}:=\psi_{ij}\cdot\zeta^{k-2}$, $\Phi:=\Psi\cdot \zeta^{k-2}$, and $\Phi_d:=\Psi_d\cdot \zeta^{k-2}$. The presence of the factor $\zeta^{k-2}$ now leads to an extra $k/2$ factor in Fact \ref{fact: Lip Psi}, namely we have $Lip(\Phi),Lip(\Phi_d)\leq k/2\e^4$ (indeed, each $p_{ij}$ is $k$-Lipschitz). The invariance argument in {\bf Step 3} proceeds identically to give that $Q(\Phi)=(Q(\Psi))\cdot \zeta^{k-2}$ and analogously for $\Phi_d$. Hence, Claim \ref{claim: max Lip Q Psi and Psi d} remains true with the same estimate. In the estimate of $\alpha$, the unique difference is an extra $(k-1)$ factor; in other words, (\ref{eq: alpha >=}) becomes

\begin{equation}
    \alpha\geq \left(\frac{\pi}{72}-\delta\right)\cdot \big(1-2\e K_n\big)\cdot(k-1).
\end{equation}

Finally, when putting estimates together in {\bf Step 5}, we obtain an inequality stronger than (\ref{eq: last eq with alpha}), since $k/2\leq k-1$. Consequently, we actually even obtain the same lower bound on $\|Q\|$ as the one in Theorem \ref{mth: finite-dim}. We omit further details.
\end{proof}

{\bf Acknowledgements.} We wish to thank the anonymous referee  for carefully reading our manuscript and for the helpful report.

%-----------------------------------------------------------%
%                                                           %
% 						BIBLIOGRAPHY 						%
%                                                           %
%-----------------------------------------------------------%

\end{document}